\title{Vertex barycenter of generalized associahedra}
\author{Vincent Pilaud}
\address{CNRS \& LIX, \'Ecole Polytechnique, Palaiseau}
\email{vincent.pilaud@lix.polytechnique.fr}
\urladdr{http://www.lix.polytechnique.fr/~pilaud/}
\author{Christian Stump}
\address{Institut f\"ur Mathematik, Freie Universit\"at Berlin, Germany}
\email{christian.stump@fu-berlin.de}
\urladdr{http://homepage.univie.ac.at/christian.stump/}
\thanks{V.\,P.~was supported by the spanish MICINN grant MTM2011-22792, by the French ANR grant EGOS 12 JS02 002 01, and by the European Research Project ExploreMaps (ERC StG 208471).}
\newtheorem{theorem}{Theorem}[section]
\newtheorem{corollary}[theorem]{Corollary}
\newtheorem{proposition}[theorem]{Proposition}
\newtheorem{lemma}[theorem]{Lemma}
\theoremstyle{definition}
\newtheorem{example}[theorem]{Example}
\newcommand{\R}{\mathbb{R}} 
\newcommand{\cN}{\mathcal{N}} 
\newcommand{\fS}{\mathfrak{S}} 
\newcommand{\sfr}{{\sf r}} 
\newcommand{\sfw}{{\sf w}} 
\newcommand{\sfB}{{\sf B}} 
\newcommand{\set}[2]{\left\{ #1 \;\middle|\; #2 \right\}} 
\newcommand{\bigset}[2]{\big\{ #1 \;|\; #2 \big\}} 
\newcommand{\multiset}[2]{\left\{\!\!\left\{ #1 \;\middle|\; #2 \right\}\!\!\right\}} 
\newcommand{\gen}[1]{\left\langle #1 \right\rangle} 
\newcommand{\ssm}{\smallsetminus} 
\newcommand{\dotprod}[2]{\langle #1 | #2 \rangle} 
\newcommand{\one}{\mathbf{1}} 
\newcommand{\eqdef}{\mbox{\,\raisebox{0.2ex}{\scriptsize\ensuremath{\mathrm:}}\ensuremath{=}\,}} 
\newcommand{\fundamentalChamber}{\mathcal{C}} 
\newcommandx{\Perm}[2][1={}, 2=W]{{\sf Perm}^{#1}(#2)} 
\newcommandx{\Asso}[3][1={}, 2=c, 3=W]{{\sf Asso}^{#1}_{#2}(#3)} 
\newcommand{\basepoint}{u} 
\newcommand{\subwordComplex}[1][\Q,\rho]{\mathcal{SC}(#1)} 
\newcommand{\clusterComplex}[1][\sq{c}]{\mathcal{SC}_{#1}} 
\newcommandx{\brickPolytope}[2][1={}, 2=\Q]{\mathcal{B}^{#1}(#2)} 
\newcommandx{\clusterPolytope}[2][1={}, 2=\sq{c}]{\mathcal{B}^{#1}_{#2}} 
\newcommandx{\brickVector}[3][1={}, 2=\Q, 3=I]{\sfB^{#1}_{#2}(#3)} 
\newcommand{\wordprod}[2]{\Pi#1_{#2}} 
\newcommandx{\Root}[4][1={}, 2=\Q, 3=I, 4=k]{{\sfr}^{#1}_{#2}(#3,#4)} 
\newcommandx{\Weight}[4][1={}, 2=\Q, 3=I, 4=k]{{\sfw}^{#1}_{#2}(#3,#4)} 
\newcommand{\conjugation}{\varphi} 
\newcommand{\loday}{\mathsf{L}} 
\newcommand{\orbit}{\mathcal{O}} 
\newcommand{\sq}[1]{{\rm #1}} 
\newcommand{\Q}{\sq{Q}} 
\newcommand{\q}{\sq{q}} 
\newcommand{\w}{\sq{w}} 
\newcommand{\commute}[1]{\ifthenelse{\equal{#1}{}}{\pi}{{\pi(#1)}}} 
\newcommand{\permutation}[1]{\ifthenelse{\equal{#1}{}}{\psi}{\psi(#1)}} 
\newcommand{\rotate}[1]{\ifthenelse{\equal{#1}{}}{\circlearrowleft}{{#1^\circlearrowleft}}} 
\newcommand{\rotation}[1]{\ifthenelse{\equal{#1}{}}{\rho}{\rho(#1)}} 
\newcommand{\shift}[1]{{\sf shift_{\sq{#1}}}} 
\newcommand{\newleftrightarrow}{\mathbin{\tikz [baseline=0.2em] \draw [<->] (-.4em,.4em) -- (.4em,.4em);}} 
\newcommand{\newupdownarrow}{\mathbin{\tikz [thin, baseline=-0.2em] \draw [<->] (0em,-.3em) -- (0em,.3em);}} 
\newcommand{\newupdownleftrightarrow}{\mathbin{\tikz [baseline=-0.2em] {\draw [<->] (0em,-.3em) -- (0em,.3em); \draw [<->] (-.5em,0em) -- (.5em,0em);}}} 
\DeclareRobustCommand{\conjugate}[1]{\ifthenelse{\equal{#1}{}}{\psi}{#1^{\!\ \newupdownarrow\!\ }}} 
\DeclareRobustCommand{\reverse}[1]{\ifthenelse{\equal{#1}{}}{{\newleftrightarrow}}{#1^{\newleftrightarrow}}} 
\DeclareRobustCommand{\conjugatereverse}[1]{\ifthenelse{\equal{#1}{}}{{\newupdownleftrightarrow}}{#1^{\newupdownleftrightarrow}}} 
\newcommand{\mirror}[1]{\ifthenelse{\equal{#1}{}}{\mu}{\mu(#1)}} 
\newcommand{\sw}[2]{\sq{#1}(\sq{#2})} 
\newcommand{\cwo}[1]{\sw{w_\circ}{#1}} 
\newcommand{\cw}[1]{\sq{#1}\cwo{#1}} 
\newcommandx{\translation}[2][1={}, 2=c]{\Omega^{#1}_{#2}} 
\newcommand{\positiveFacet}{\mathsf{P}} 
\newcommand{\negativeFacet}{\mathsf{N}} 
\DeclareMathOperator{\conv}{conv} 
\DeclareMathOperator{\vect}{vect} 
\DeclareMathOperator{\barycenter}{\mathfrak{B}} 
\newcommand{\fref}[1]{Figure~\ref{#1}} 
\newcommand{\ie}{\textit{i.e.}~} 
\newcommand{\ordinal}{\textsuperscript{th}} 
\definecolor{darkblue}{rgb}{0,0,0.7} 
\newcommand{\darkblue}{\color{darkblue}} 
\newcommand{\defn}[1]{\emph{\darkblue #1}} 
\begin{document}

\begin{abstract}
We show that the vertex barycenter of generalized associahedra and permutahedra coincide for any finite Coxeter system.
\end{abstract}

\maketitle

\vspace*{-20pt}

\tableofcontents

\vspace*{-30pt}

\section{Introduction}

Generalized associahedra were originally defined and studied by S.~Fomin and A.~Zelevinsky~\cite{FominZelevinsky-Ysystems} and by F.~Chapoton, S.~Fomin, and  A.~Zelevinsky~\cite{ChapotonFominZelevinsky}.
These polytopes realize finite type cluster complexes~\cite{FominZelevinsky-clusterAlgebrasII}.
More general polytopal realizations of these simplicial complexes were later constructed by C.~Hohl\-weg, C.~Lange, and H.~Thomas in~\cite{HohlwegLangeThomas} by removing boundary hyperplanes from Coxeter permutahedra.
This construction is based on Cambrian fans which were defined and studied by N.~Reading and D.~Speyer~\cite{Reading-CambrianLattices, Reading-sortableElements, ReadingSpeyer}.
Recently S.~Stella extended in~\cite{Stella} the approach of~\cite{ChapotonFominZelevinsky} and showed that the resulting realizations of generalized associahedra coincide with those of~\cite{HohlwegLangeThomas}.
In~\cite{PilaudStump-brickPolytope}, we provided a new approach to generalized associahedra using brick polytopes for spherical subword complexes.
We use this latter approach to prove that the vertex barycenters of all $c$-associa\-hedra coincide with the vertex barycenter of the underlying permutahedron.

\begin{theorem}
Let~$(W,S)$ be a finite Coxeter system, let~$c \in W$ be a Coxeter element, and let $\Asso[\basepoint]$ be the $c$-associahedron obtained from a fairly balanced $W$-permutahedron $\Perm[\basepoint]$ by removing all boundary hyperplanes not containing a $c$-singleton.
Then the vertex barycenters of $\Asso[\basepoint]$ and $\Perm[\basepoint]$ coincide.
\label{thm:main}
\end{theorem}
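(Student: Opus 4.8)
The plan is to realize the $c$-associahedron as a brick polytope and to reduce the statement to a summation identity for weight functions, which I then resolve through the rotation symmetry of the word $\cw{c}$. By \cite{PilaudStump-brickPolytope}, the associahedron $\Asso[\basepoint]$ is, up to an explicit translation, the brick polytope $\brickPolytope[][\cw{c}]$, whose vertices are the brick vectors $\brickVector[][\cw{c}][I]$ indexed by the facets $I$ of the cluster subword complex $\clusterComplex$; write $\cF$ for the set of these facets. Since the vertex barycenter of a polytope is the average of its vertices, and since the vertex barycenter of $\Perm[\basepoint] = \conv\set{w(\basepoint)}{w \in W}$ equals $\tfrac{1}{|W|}\sum_{w\in W} w(\basepoint)$, it suffices to evaluate $\tfrac{1}{|\cF|}\sum_{I\in\cF}\brickVector[][\cw{c}][I]$ and to check, after incorporating the translation from \cite{PilaudStump-brickPolytope}, that it coincides with the permutahedral barycenter.

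I would then expand each brick vector into its weight contributions and exchange the order of summation,
\[
\sum_{I\in\cF}\brickVector[][\cw{c}][I] \;=\; \sum_{I\in\cF}\sum_{k}\Weight[][\cw{c}][I][k] \;=\; \sum_{k}\sum_{I\in\cF}\Weight[][\cw{c}][I][k],
\]
the index $k$ ranging over the letters of $\cw{c}$. This localizes the computation to the per-position weight sum $\sum_{I\in\cF}\Weight[][\cw{c}][I][k]$: the letters of the initial copy of $c$ (those carrying the $c$-singletons) contribute rigidly, while the letters of the $c$-sorting word of $w_\circ$ carry the genuine dependence on the facet.

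The heart of the argument is that $\cw{c}$ is invariant, as a cyclic word up to the twist $s \mapsto \rotation{s}$, under rotation, and that this induces the rotation map $\rotate{I}$ on $\cF$. Carrying the first letter of $\cw{c}$ to the back conjugates every prefix product by the corresponding simple reflection, so the weight functions, and with them the brick vectors, transform along a rotation orbit by successive simple reflections, up to explicit shift vectors. Summing the per-position weight sums over a full rotation orbit should then reorganize them into a $W$-symmetric combination that I would identify, term by term, with the permutahedral vertices $w(\basepoint)$, the fairly balanced hypothesis on $\basepoint$ and the distinguished role of the $c$-singletons being exactly what makes this identification exact.

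The main obstacle is precisely this last step: one must track the shift vectors accumulated by the successive reflections around a rotation orbit, verify that the orbits of $\rotate{\cdot}$ glue the partial weight sums into complete $W$-symmetric sums rather than into proper pieces, and finally match the outcome, position by position, with $\tfrac{1}{|W|}\sum_{w} w(\basepoint)$. Once this rotation bookkeeping is under control, the two barycenters coincide.
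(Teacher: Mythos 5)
Your reduction to brick polytopes and your appeal to rotation reproduce half of the paper's argument, but rotation alone is structurally incapable of finishing the proof, and that is exactly where your ``main obstacle'' sits. What rotation gives (Lemma~\ref{lem:rotate}, together with commutations, Lemma~\ref{lem:commute}) is that passing from $\sq{c}$ to $\sq{c}_2\cdots\sq{c}_n\sq{c}_1$ moves every translated brick vector, hence the barycenter, only along the line $\R\alpha_{c_1}$; cycling through the $n$ rotations of $\sq{c}$ and using that $\Delta$ is a linear basis forces all these differences to vanish, so $\barycenter(c)=\barycenter(c')$ for all Coxeter elements (Proposition~\ref{prop:allEquals}). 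But this only shows that the barycenter is a constant independent of~$c$ --- it gives no information about the value of that constant. Pinning the constant to $0$ requires a second, orientation-reversing symmetry that your proposal never invokes: the reverse and $w_\circ$-conjugate operation (Lemmas~\ref{lem:conjugate} and~\ref{lem:reverse}, Corollary~\ref{coro:conjugateReverse}), which yields $\brickVector[][\sq{c}][I] + \brickVector[][\sq{c}^{-1}][\permutation{I}] = \sum_{\beta\in\Phi^+}\beta$ and hence $\barycenter(c)+\barycenter(c^{-1})=0$ (Proposition~\ref{prop:superposition}); combined with rotation-invariance this gives $2\barycenter(c)=0$. No amount of rotation bookkeeping can substitute for this ingredient.

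Concretely, your hoped-for last step --- that summing over a full rotation orbit reorganizes the weights into a $W$-symmetric, hence zero-sum, combination matching permutahedron vertices --- is false. In type $A_3$ the $14$ facets of $\clusterComplex$ split under $\shift{c}$ into orbits of sizes $6,3,3,2$ (orbits A--D of Figure~\ref{fig:orbits}); the two orbits of size $3$ each have \emph{nonzero} vertex sum, and only their union cancels, the pairing between them being induced precisely by the reflection $\permutation{}_2^{-1}\circ\permutation{}_1$, i.e.\ by the reverse-and-conjugate symmetry you are missing (Theorem~\ref{thm:orbittheorem}). This is also why the introduction warns that the ``naive'' term-by-term identification with permutahedron vertices already fails in type~$A_3$. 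A final symptom: the fairly balanced hypothesis $w_\circ(\basepoint)=-\basepoint$ plays no role anywhere in your rotation bookkeeping (the rotation lemma holds for an arbitrary basepoint), yet in the paper it is used exactly and only in the conjugation and reversal computations (Equalities~\eqref{eq:secondfairlybalanced}--\eqref{eq:fourthfairlybalanced}); an argument in which the crucial hypothesis stays idle cannot be completed as planned.
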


This property was observed by F.~Chapoton for Loday's realization of the classical associahedron~\cite{Loday}.
The balanced case in types $A$ and $B$ was conjectured by C.~Hohlweg and C.~Lange in~\cite{HohlwegLange} and proven by C.~Hohlweg, J.~Lortie, and A.~Raymond in~\cite{HohlwegLortieRaymond} using an orbit refinement of Theorem~\ref{thm:main} in these types.
The general balanced case was conjectured by C.~Hohlweg, C.~Lange, and H.~Thomas in~\cite{HohlwegLangeThomas}. This conjecture and the mentioned orbit refinement were again discussed in~\cite{Hohlweg}. We will as well settle this orbit refinement in the final Section~\ref{subsec:orbits}.

Finally, we want to remark that there is a naive approach to Theorem~\ref{thm:main}. Namely, one might hope that one can partition the vertices of the permutahedron $\Perm[\basepoint]$ in such a way that the sum of the vertices in each part corresponds to a vertex of the $c$-associahedron $\Asso[\basepoint]$.
There would even be a natural candidate for this partition coming from the theory of Cambrian lattices~\cite{Reading-sortableElements}.
This approach trivially works for dihedral types, but it already turns out to fail in type~$A_3$.
\vspace*{5pt}

\section{Generalized associahedra}
\label{sec:associahedra}


\subsection{Finite Coxeter groups}

Consider a \defn{finite Coxeter system}~$(W,S)$, acting on a real euclidean vector space~$V$ of dimension~$|S| = n$.
The \defn{Coxeter arrangement} is the collection of all reflecting hyperplanes in~$V$.
It decomposes~$V$ into open polyhedral cones whose closures are called \defn{chambers}.
The \defn{Coxeter fan} is the polyhedral fan formed by these chambers and all their faces.
We denote by~$\fundamentalChamber$ the \defn{fundamental chamber}, whose boundary hyperplanes are the reflecting hyperplanes of the reflections in~$S$.

Let $\Delta \eqdef \set{\alpha_s}{s \in S}$ denote the \defn{simple roots} and~$\nabla \eqdef \set{\omega_s}{s \in S}$ denote the \defn{fundamental weights}, defined such that~$s(\omega_t) = \omega_t-\delta_{s=t}\alpha_s$ for all~$s,t \in S$.
In other words, $\Delta$ and~$\nabla$ are dual bases of the euclidean space~$V$ (up to renormalization).
Geometrically, simple roots and fundamental weights give respectively normal vectors and rays of the fundamental chamber~$\fundamentalChamber$.
Let $\Phi \eqdef \set{ w(\alpha)}{ w \in W, \alpha \in \Delta}$ be the \defn{root system} for~$(W,S)$, with \defn{positive roots}~$\Phi^+ \eqdef \Phi \, \cap \, \R_{\ge 0}\Delta$ and \defn{negative roots}~$\Phi^- \eqdef -\Phi^+$.

A \defn{reduced expression} for an element~$w \in W$ is a minimal length expression of~$w$ as a product of generators in~$S$.
Let~$w_\circ$ denote the \defn{longest element} of~$W$, which sends the fundamental chamber~$\fundamentalChamber$ to its negative~$-\fundamentalChamber$.
We let~$\conjugation : S \to S$ denote its conjugation action on the generators defined by~$\conjugation(s) \eqdef w_\circ s w_\circ$.
Observe that~$w_\circ(\alpha_s) = -\alpha_{\conjugation(s)}$ and~$w_\circ(\omega_s) = -\omega_{\conjugation(s)}$.

We refer to~\cite{Humphreys, Humphreys1978} for further details on Coxeter groups.

\begin{example}[Symmetric groups]
\label{exm:symmetryGroups}
The symmetric group~$\fS_{n+1}$, acting on the hyperplane $H \eqdef \set{x \in \R^{n+1}}{\sum x_i = 0}$ by permutation of the coordinates, and generated by the adjacent transpositions~${\tau_p \eqdef (p \;\; p+1)}$, is the \defn{type~$A_n$ Coxeter system}.
Its simple roots are ${\Delta \eqdef \set{e_{p+1}-e_p}{p \in [n]}}$, its fundamental weights are $\nabla \eqdef \set{\sum_{q > p} e_q}{p \in [n]}$, and its root system is~$\Phi \eqdef \set{e_p-e_q}{p \neq q \in [n+1]}$.
Note that we have chosen the fundamental weights to match usual conventions, even if they do not live in the hyperplane~$H$.
The careful reader might prefer to project these weights down to~$H$ and adapt our discussions below accordingly.
Figures~\ref{fig:A2}\,(a) and~\ref{fig:A3}\,(a) show the type~$A_2$ and~$A_3$ Coxeter arrangements.

\begin{figure}
	\centerline{\includegraphics[width=\textwidth]{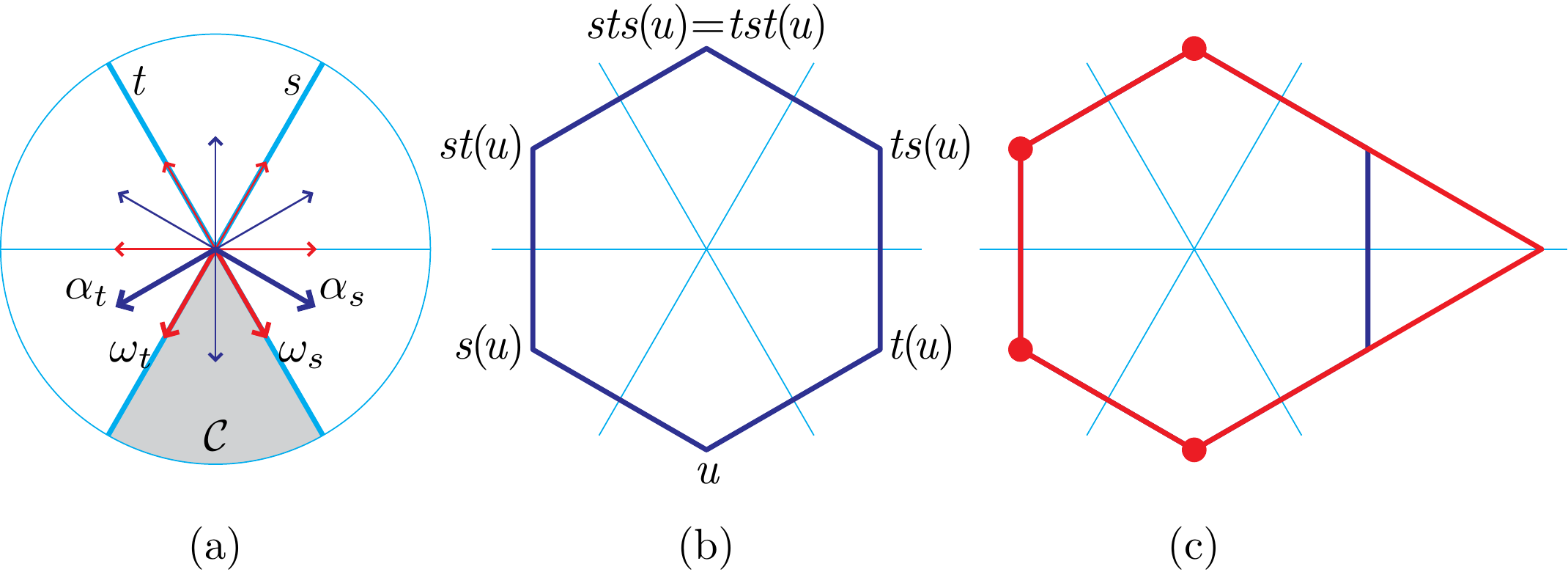}}
	\caption{The type~$A_2$ Coxeter arrangement (a) and the balanced $A_2$-permutahedron (b) and $(st)$-associahedron (c).}
	\label{fig:A2}
\end{figure}

\end{example}


\subsection{Permutahedra}

Let~$\basepoint$ be a point in the interior of the fundamental chamber.
We write~$\basepoint \eqdef \sum_{s \in S} \basepoint_s \omega_s$ with~$\basepoint_s \in \R_{> 0}$.
The \defn{$W$-permutahedron}~$\Perm[\basepoint]$ is the convex hull of the orbit of~$\basepoint$ under~$W$.
Its combinatorial properties are determined by those of the Coxeter group~$W$.
Let us observe in particular that:
\begin{enumerate}[(i)]
\item The normal fan of~$\Perm[\basepoint]$ is the Coxeter fan.
\item For any~$w \in W$ and~$s \in S$, the facet of~$\Perm[\basepoint]$ orthogonal to~$w(\omega_s)$ is defined by the inequality~$\dotprod{w(\alpha_s)}{x} \le \dotprod{\alpha_s}{\basepoint}$ and supported by the hyperplane~$w(\basepoint + \vect(\Delta \ssm \alpha_s))$.
\item Each face of~$\Perm[\basepoint]$ is a $W'$-permutahedron for a parabolic subgroup~$W'$ of~$W$, and the faces of~$\Perm[\basepoint]$ are naturally parametrized by cosets of parabolic subgroups of~$W$.
\end{enumerate}
We refer to the detailed survey on the $W$-permutahedra in~\cite{Hohlweg}.

If~$\basepoint = \one \eqdef \sum_{\omega \in \nabla} \omega$, we say that the $W$-permutahedron $\Perm[\basepoint]$ is \defn{balanced}, and we simply denote it by~$\Perm$ rather than~$\Perm[\one]$.
Note that the balanced $W$-permutahedron~$\Perm$ is (a translate of) the Minkowski sum of all positive roots (each considered as a one-dimensional polytope).
Finally, we say that the $W$-permutahedron~$\Perm$ is \defn{fairly balanced} if~$w_\circ(\basepoint) = -\basepoint$, \ie if $\basepoint_s = \basepoint_{\conjugation(s)}$ for all~$s \in S$.

\begin{example}[Classical permutahedron]
\label{exm:permutahedron}
The \defn{classical permutahedron} is the convex hull of all permutations of~$\{0,\dots,n\}$, regarded as vectors in~$\R^{n+1}$.
According to our choice of fundamental weights, we have~$\sum_{\omega \in \nabla} \omega = (0,\dots,n)$, so that the classical permutahedron coincides with the balanced $A_n$-permutahedron~$\Perm[][A_n]$.
Figures~\ref{fig:A2}\,(b) and~\ref{fig:A3}\,(b) show balanced $A_2$- and~$A_3$-permutahedra.
\end{example}


\subsection{Associahedra}
\label{subsec:generalizedassociahedra}

We now recall the construction of generalized associahedra given by C.~Hohlweg, C.~Lange, and H.~Thomas in~\cite{HohlwegLangeThomas}, based on the notions of sortable elements, Cambrian lattices and Cambrian fans defined and studied in~\cite{Reading-CambrianLattices, Reading-sortableElements, ReadingSpeyer}.

Fix a \defn{Coxeter element}~$c$ of~$W$, and a reduced expression~$\sq{c}$ of~$c$.
That is to say, $\sq{c}$ is a word on~$S$ where each simple reflection appears precisely once.
We say that~$s \in S$ is \defn{initial} in~$c$ if there is a reduced expression for~$c$ starting with~$s$.
For~$w \in W$, we denote by~$\sw{w}{c}$ the \defn{$\sq{c}$-sorting word} of~$w$, \ie the lexicographically first (as a sequence of positions) reduced subword of~$\sq{c}^\infty$ for $w$.
This word can be written as~$\sw{w}{c} = \sq{c}_{K_1}\sq{c}_{K_2}\cdots\sq{c}_{K_p}$, where~$\sq{c}_{K}$ denotes the subword of~$\sq{c}$ only taking the simple reflections in~$K \subset S$ into account.
The element~$w$ is then called \defn{$c$-sortable} if ${K_1\supseteq K_2\supseteq\cdots\supseteq K_p}$.
Observe that the property of being $c$-sortable does not depend on the particular reduced expression~$\sq{c}$ of the Coxeter element~$c$.
Finally, $w$ is called \defn{$c$-singleton} if $w$ is $c$-sortable and $w w_\circ$ is $(c^{-1})$-sortable.

The \defn{$c$-associahedron}~$\Asso[\basepoint]$ is obtained from $\Perm[\basepoint]$ by removing all boundary hyperplanes not containing any vector $w(\basepoint)$ for a $c$-singleton~$w$ of~$W$.
The boundary complex of its polar is the \defn{cluster complex} defined by S.~Fomin and A.~Zelevinsky in~\cite{FominZelevinsky-clusterAlgebrasII}, and its normal fan is the \defn{$c$-Cambrian fan} defined by N.~Reading and D.~Speyer~\cite{ReadingSpeyer}.
Note that the combinatorics of the $c$-associahedron (the cluster complex) does not depends on~$c$, while its geometry (in particular, the $c$-Cambrian fan) does.
We denote by~$\barycenter^\basepoint(c)$ the barycenter of the $c$-associahedron~$\Asso[\basepoint]$.
We say that $\Asso[\basepoint]$ is \defn{balanced} or \defn{fairly balanced} if $\Perm[\basepoint]$~is.
We simply denote by~$\Asso$ and~$\barycenter(c)$ the balanced $c$-associahedron and its barycenter.

\begin{example}[Loday's associahedron]
\label{exm:loday}
The cluster complex of type~$A_n$ is isomorphic to the simplicial complex of crossing-free sets of internal diagonals of a convex $(n+3)$-gon.
Its vertices are internal diagonals and its facets are triangulations of the $(n+3)$-gon.
In~\cite{Loday}, J.-L.~Loday provided an elegant realization of this simplicial complex, based on the following vertex description.
Label the vertices of the $(n+3)$-gon cyclically from~$0$ to~$n+2$.
Then, associate to each triangulation~$T$ of the $(n+3)$-gon its \defn{Loday vector}~$\loday(T)$ whose $j$\ordinal{} coordinate is given by:
$$\loday(T)_j \eqdef \big( j - \min\set{i \in [0,j-2]}{ij \in T} \big) \cdot \big( \max\set{k \in [j+2,n+2]}{jk \in T} - j \big).$$
\defn{Loday's associahedron} is the convex hull of the Loday vectors of all triangulations of the $(n+3)$-gon:
$$\loday_n \eqdef \conv \bigset{\loday(T)}{T \text{ triangulation of the } (n+3)\text{-gon}}.$$
In fact, Loday's associahedron~$\loday_n$ coincides with the balanced $(\tau_1 \cdots \tau_n)$-associa\-hedron $\Asso[][\tau_1 \cdots \tau_n][A_n]$.
This polytope is illustrated in Figures~\ref{fig:A2}\,(c) and~\ref{fig:A3}\,(c) for types~$A_2$ and~$A_3$.
Loday's vertex description of the associahedron was extended to a vertex description of all $c$-associahedra of type~$A$ and~$B$ in~\cite{HohlwegLange}.
\end{example}

\begin{figure}
	\centerline{\includegraphics[width=\textwidth]{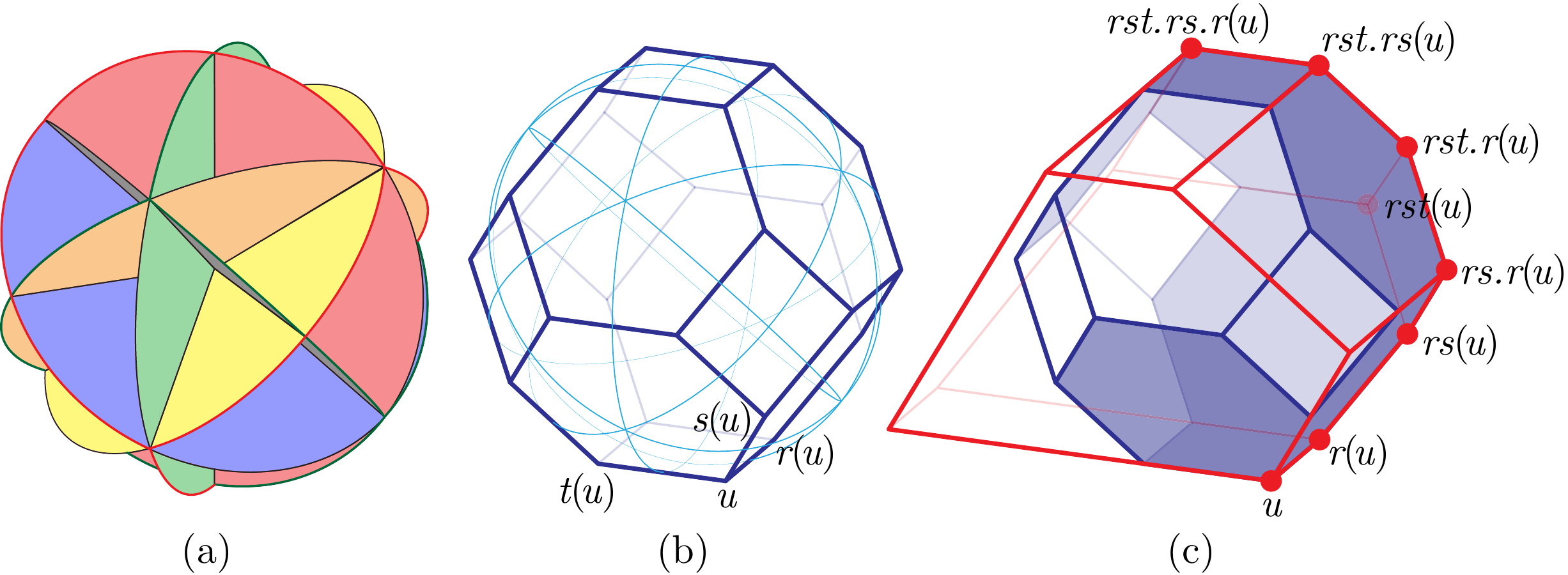}}
	\caption{The type~$A_3$ Coxeter arrangement (a) and the balanced $A_3$-permutahedron (b) and $(rst)$-associahedron (c).}
	\label{fig:A3}
\end{figure}


\subsection{Brick polytopes}

In the remainder of this section, we recall the viewpoint on $c$-associahedra coming from brick polytopes. We refer to~\cite{PilaudStump-brickPolytope} for the general treatment and to~\cite{PilaudSantos-brickPolytope} for a specific treatment of the type~$A$ situation.


\subsubsection{Subword complexes}

For a word $\Q \eqdef \q_1 \q_2 \cdots \q_m$ on~$S$, and an element~$\rho \in W$, A.~Knutson and E.~Miller define in~\cite{KnutsonMiller-subwordComplex} the \defn{subword complex}~$\subwordComplex$ to be the simplicial complex of those subwords of~$\Q$ whose complements contain a reduced expression for~$\rho$ as a subword.
A vertex of~$\subwordComplex$ is a position of a letter in~$\Q$.
We denote by~$[m] \eqdef \{1,2,\dots,m\}$ the set of positions in~$\Q$.
A facet of~$\subwordComplex$ is the complement of a set of positions which forms a reduced expression for~$\rho$ in~$\Q$.

In this paper, we only consider spherical subword complexes~$\subwordComplex$, for which we can assume that~$\rho = w_\circ$ and~$\Q$ contains a reduced expression for~$w_\circ$ (see~\cite[Theorem~3.7]{CeballosLabbeStump}).
We write~$\subwordComplex[\Q]$ instead of~$\subwordComplex[\Q,w_\circ]$ to shorten notations.

To avoid confusion between the words on~$S$ and the elements of~$W$, we use roman letters like~$\w \eqdef \w_1 \cdots \w_p$ for words and italic letters like~$w = w_1 \cdots w_p$ for group elements.
The distinction is in general clear from the context and can usually be ignored.

\begin{example}
\label{exm:clusterComplex}
Consider the Coxeter group~$\fS_3 = \gen{\tau_1,\tau_2} = \gen{s,t}$ and the word $\Q_2 \eqdef \sq{ststs}$.
The reduced expressions of~$w_\circ$ are~$sts$ and~$tst$.
Therefore, the facets of~$\subwordComplex[\Q_2]$ are~$\{1,2\}$, $\{2,3\}$, $\{3,4\}$, $\{4,5\}$, and~$\{1,5\}$, and the subword complex~$\subwordComplex[\Q_2]$ is a pentagon.
Similarly, for Coxeter group $\fS_4 = \gen{\tau_1,\tau_2,\tau_3} = \gen{r,s,t}$ and the word~$\Q_3 \eqdef \sq{rstrstrsr}$, the subword complex~$\subwordComplex[\Q_3]$ is isomorphic to the cluster complex of type~$A_3$.
\end{example}

\begin{example}[Primitive sorting networks]
\label{exm:networks}
In type~$A_n$, we can represent the word~$\Q = \q_1\q_2 \cdots \q_m$ by a \defn{sorting network}~$\cN_{\Q}$ as illustrated in \fref{fig:network}~(left).
The network~$\cN_\Q$ is formed by~$n+1$ horizontal lines (its \defn{levels}, labeled from bottom to top) together with~$m$ vertical segments (its \defn{commutators}, labeled from left to right) corresponding to the letters of~$\Q$.
If $\q_k = \tau_p$, the $k$\ordinal{} commutator of~$\cN_{\Q}$ lies between the $p$\ordinal{} and $(p+1)$\ordinal{} levels of~$\cN_{\Q}$.

\begin{figure}[h]
	\centerline{\includegraphics[width=\textwidth]{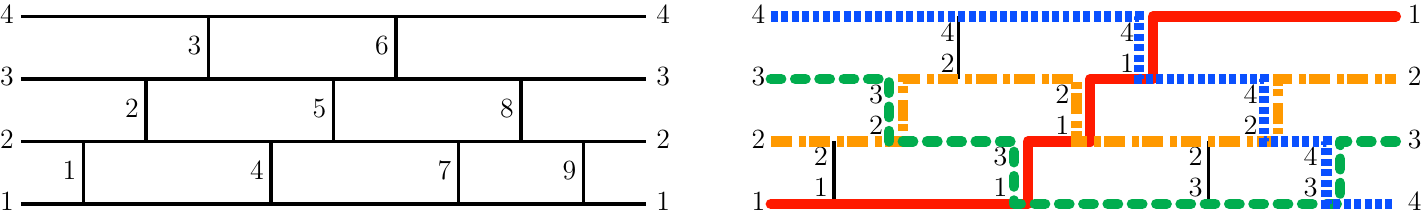}}
	\caption{The sorting network~$\cN_{\Q_3}$ and the pseudoline arrangement~$\Lambda_{\{1,3,7\}}$.}
	\label{fig:network}
\end{figure}

A \defn{pseudoline} supported by~$\cN_\Q$ is an abscissa monotone path on the network~$\cN_\Q$.
A commutator of~$\cN_\Q$ is a \defn{crossing} between two pseudolines if it is traversed by both pseudolines, and a \defn{contact} if its endpoints are contained one in each pseudoline.
A \defn{pseudoline arrangement}~$\Lambda$ (with contacts) is a set of~$n+1$ pseudolines on~$\cN_\Q$, any two of which have precisely one crossing, possibly some contacts, and no other intersection.
As a consequence of the definition, the pseudoline of~$\Lambda$ which starts at level~$p$ ends at level~$n-p+2$, and is called the $p$\ordinal{} pseudoline of~$\Lambda$.
As illustrated in \fref{fig:network} (right), a facet~$I$ of~$\subwordComplex[\Q]$ is represented by a pseudoline arrangement~$\Lambda_I$ supported by~$\cN_{\Q}$.
Its contacts (resp.~crossings) are the commutators of~$\cN_\Q$ corresponding to the letters of~$I$ (resp.~of the complement~of~$I$).

A \defn{brick} of~$\cN_{\Q}$ is a connected component of its complement, bounded on the right by a commutator of~$\cN_{\Q}$.
For $k \in [m]$, the $k$\ordinal{} brick is that immediately to the left of the $k$\ordinal{} commutator of~$\cN_{\Q}$.
\end{example}


\subsubsection{Brick polytopes}

In~\cite{PilaudStump-brickPolytope}, we constructed a polytope associated to a spherical subword complex $\subwordComplex[\Q]$ as follows.
To a facet~$I$ of~$\subwordComplex[\Q]$ and a position~$k$ in~$\Q$, we associate a root and a weight
$$\Root \eqdef \wordprod{\Q}{[k-1]\ssm I}(\alpha_{q_k}) \quad \text{and} \quad \Weight \eqdef \wordprod{\Q}{[k-1]\ssm I}(\omega_{q_k}),$$
where~$\wordprod{\Q}{X}$ denotes the product of the reflections~$q_x \in \Q$ for~$x \in X$.
The \defn{brick vector} of the facet~$I$ is the vector
$$\brickVector \eqdef \sum_{k \in [m]} \Weight,$$
and the \defn{brick polytope}~$\brickPolytope$ is the convex hull of all the brick vectors,
$$\brickPolytope \eqdef \conv \bigset{\brickVector}{I \text{ facet of } \subwordComplex[\Q]}.$$

\begin{example}[Counting bricks]
\label{exm:countingBricks}
In type~$A$, the above definitions for $\Root$, $\Weight$, and $\brickVector$ can be visually interpreted on the pseudoline arrangement~$\Lambda_I$ defined in Example~\ref{exm:networks}.
Namely, for any facet~$I$ of~$\subwordComplex[\Q]$, any position~$k \in [m]$, and any~$p \in [n+1]$,
\begin{enumerate}[(i)]
\item the root $\Root$ is the difference~$e_t-e_b$, where $t$~and~$b$ are such that the $t$\ordinal{} and $b$\ordinal{} pseudolines of~$\Lambda_I$ arrive respectively on top and bottom of the $k$\ordinal{} commutator of~$\cN_{\Q}$.
\item the weight~$\Weight$ is the characteristic vector of the pseudolines of~$\Lambda_I$ which pass above the $k$\ordinal{} brick of~$\cN_{\Q}$.
\item the $p$\ordinal{} coordinate of the brick vector~$\brickVector$ is the number of bricks of~$\cN_{\Q}$ below the $p$\ordinal{} pseudoline of~$\Lambda_I$.
\end{enumerate}
We refer to~\cite{PilaudSantos-brickPolytope} for further details on type~$A$ brick polytopes.
\end{example}

As observed in~\cite[Section~3.1]{CeballosLabbeStump}, the root function~${\Root[][\Q][\cdot][\cdot]}$ encodes the combinatorics of flips in the subword complex~$\subwordComplex[\Q]$.
We will mainly use here that~$\Root[][\Q][I][\cdot]$ is a bijection between the complement of~$I$ and~$\Phi^+$, and thus that
\begin{equation}
\label{eq:sumPositiveRoots}
\sum_{k \notin I} \Root = \sum_{\beta \in \Phi^+} \beta.
\end{equation}

In~\cite{PilaudStump-brickPolytope}, we proved that (the dual of) the brick polytope~$\brickPolytope$ realizes the subword complex if and only if the \defn{root configuration}~$\multiset{\Root}{i \in I}$ of any (or equivalently all) facet~$I$ of~$\subwordComplex[\Q]$ is linearly independent.
The following family is the main motivating example for the brick polytope construction.


\subsubsection{Generalized associahedra and brick polytopes}

Let~$c$ be a Coxeter element of~$W$, let~$\sq{c}$ be a reduced expression for~$c$, and let $\cwo{c} \eqdef \w_1 \cdots \w_N$ denote the $\sq{c}$-sorting word for the longest element~$w_\circ$.
According to~\cite{CeballosLabbeStump}, the subword complex $\clusterComplex \eqdef \subwordComplex[\cw{c}]$ is (isomorphic to) the cluster complex of type~$W$.
We furthermore proved in~\cite[Theorem~6.1]{PilaudStump-brickPolytope} that the brick polytope $\clusterPolytope \eqdef \brickPolytope[][\cw{c}]$ is indeed a polytopal realization of $\clusterComplex$.
Finally, we also proved in~\cite[Theorem~6.6]{PilaudStump-brickPolytope} that ---~up to affine translation by a vector~$\translation$~--- the brick polytope~$\clusterPolytope$ coincides with the balanced $c$-associahedron $\Asso$.
More explicitly, we have
\begin{equation}
\label{eq:associahedraBrickVersion}
\Asso = \conv \bigset{\brickVector[][\sq{c}][I] - \translation}{I \text{ facet of }\clusterComplex},
\end{equation}
where the affine translation $\translation$ is given by
$$\translation \eqdef \sum_{k \in [N]} w_1 \cdots w_{k-1}(\omega_{w_k}),$$
where $\w_1 \dots \w_N$ is the $\sq{c}$-sorting word for~$w_\circ$.
In Equality~(\ref{eq:associahedraBrickVersion}) and throughout the paper, we abuse notation and write~$\brickVector[][\sq{c}]$ rather than~$\brickVector[][\cw{c}]$, and similarly for~$\Root[][\sq{c}]$ and~$\Weight[][\sq{c}]$, as we already did for $\clusterComplex$ and for~$\clusterPolytope$.

\begin{example}
\label{exm:lodaycontinued}
The words~$\Q_2$ and~$\Q_3$ of Example~\ref{exm:clusterComplex} are precisely~$(\sq{st})\cwo{st}$ and~$(\sq{rst})\cwo{rst}$.
Therefore, the brick polytopes~$\brickPolytope[][\Q_2] = \clusterPolytope[][\sq{st}]$ and~$\brickPolytope[][\Q_3] = \clusterPolytope[][\sq{rst}]$ coincide, up to translation, with Loday's associahedra from Figures~\ref{fig:A2}\,(c) and~\ref{fig:A3}\,(c).

More generally, label the vertices of the $(n+3)$-gon cyclically from~$0$ to~$(n+2)$ and set~$\sq{c} \eqdef \tau_1 \tau_2 \cdots \tau_n$.
Consider the map sending the $i$\ordinal{} letter of~$\cw{c}$ to the $i$\ordinal{} internal diagonal of the $(n+3)$-gon in lexicographic order.
This map induces an isomorphism between the simplicial complex of crossing free sets of internal diagonals of the $(n+3)$-gon and the type~$A_n$ subword complex~$\clusterComplex$.
See~\cite{Woo, Stump, PilaudPocchiola} for details and extensions of this isomorphism.
The brick polytope~$\clusterPolytope$ then coincides with Loday's associahedron~$\loday_n$.
\end{example}


\subsubsection{Affine translation and greedy facets}

The \defn{positive greedy facet}~$\positiveFacet(\Q)$ (resp.~the \defn{negative greedy facet}~$\negativeFacet(\Q)$) is the lexicographically first (resp.~last) facet of~$\subwordComplex[\Q]$.
It turns out that $\positiveFacet(\Q)$ (resp. $\negativeFacet(\Q)$) is the unique facet whose root configuration $\multiset{\Root}{i \in I}$ contains only positive (resp. negative) roots.
These two particular facets were defined and studied in~\cite{PilaudStump-ELlabeling} to construct EL-labelings and canonical spanning trees for subword complexes.

We now focus on the situation of~$\clusterComplex$.
To simplify notation, we set here as well~$\positiveFacet_{\sq{c}} \eqdef \positiveFacet(\cw{c})$ and $\negativeFacet_{\sq{c}} \eqdef \negativeFacet(\cw{c})$.
Observe that $\positiveFacet_{\sq{c}}$ is the set of positions of the first appearance of the generators of~$S$ within~$\cw{c}$.
Similarly, $\negativeFacet_{\sq{c}}$ is the set of positions of the last appearance in~$\cw{c}$ of the generators of~$S$.
Up to transpositions of consecutive commuting letters, these are moreover the first and the last $n$ positions in~$\cw{c}$.

In Equality~(\ref{eq:associahedraBrickVersion}), the vertices of~$\clusterPolytope - \translation$ corresponding to the positive and negative greedy facets~$\positiveFacet_{\sq{c}}$ and~$\negativeFacet_{\sq{c}}$ coincide respectively with the vertices~$e(\one) = \one$ and~${w_\circ(\one) = -\one}$ of~$\Asso$. This implies that
\begin{equation}
\label{eq:greedy}
\translation = \brickVector[][\sq{c}][\positiveFacet_{\sq{c}}] - \one = \brickVector[][\sq{c}][\negativeFacet_{\sq{c}}] + \one.
\end{equation}


\subsubsection{The brick polytope for arbitrary basepoints}

We finally describe the situation of a general basepoint~$\basepoint \eqdef \sum_{s \in S} \basepoint_s \omega_s$ in the interior of the fundamental chamber.
As observed in~\cite[Remark~6.11]{PilaudStump-brickPolytope}, the brick polytope construction and its realization properties remain valid if we replace the root and weight functions by
$$\Root[\basepoint] \eqdef u_{q_k} \Root \quad \text{and} \quad \Weight[\basepoint] \eqdef u_{q_k} \Weight,$$
the brick vector by
$$\brickVector[\basepoint][\Q][I] \eqdef \sum_{k \in [m]} \Weight[\basepoint] = \sum_{k \in [m]} u_{q_k} \Weight,$$
and the brick polytope by
$$\brickPolytope[\basepoint] \eqdef \conv \bigset{\brickVector[\basepoint]}{I \text{ facet of } \subwordComplex[\Q]}.$$
The polytope~$\brickPolytope[\basepoint]$ is a deformation of~$\brickPolytope$.
Its combinatorics and its normal fan are controlled by the subword complex~$\subwordComplex[\Q]$, and therefore are independent of the basepoint~$u$, but its geometry (for example its edge lengths) is determined by~$u$.

For the subword complex~$\clusterComplex$, the polytope~$\clusterPolytope[\basepoint][c]$ is a translate of the $c$-associa\-hedron~$\Asso[\basepoint]$.
More precisely,
$$\Asso[\basepoint] = \conv \bigset{\brickVector[\basepoint][\sq{c}][I] - \translation[\basepoint][c]}{I \text{ facet of }\clusterComplex},$$
where the affine translation $\translation[\basepoint][c]$ is now given by
$$\translation[\basepoint][c] \eqdef \brickVector[\basepoint][\sq{c}][\positiveFacet_{\sq{c}}] - u = \sum_{k \in [N]} u_{q_k} w_1 \cdots w_{k-1}(\omega_{w_k}).$$


\section{The proof}

We first focus on balanced associahedra, and discuss the extension to fairly balanced associahedra in Section~\ref{subsec:fairlyBalanced}.
In the final Section~\ref{subsec:orbits}, we will discuss a further orbit refinement.
In view of \eqref{eq:associahedraBrickVersion}, the balanced version of Theorem~\ref{thm:main} reduces to the following~theorem.

\begin{theorem}
The vertex barycenter of the translated brick polytope~$\clusterPolytope - \translation$ coincides with that of the balanced $W$-permutahedron~$\Perm$. This is,
$$\sum \big( \brickVector[][\sq{c}][I] - \translation \big) = 0,$$
where the sum ranges over all facets $I$ of $\clusterComplex$.
\label{thm:revisedmain}
\end{theorem}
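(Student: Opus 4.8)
The plan is to reduce Theorem~\ref{thm:revisedmain} to a single identity for the root function on the subword complex, and to evaluate that identity by telescoping the weight function. First I would record that the balanced permutahedron~$\Perm$ has vertex barycenter at the origin: its vertices form the $W$-orbit of~$\one$, so their sum is a $W$-invariant vector and hence vanishes. Writing~$f$ for the number of facets of~$\clusterComplex$, the statement $\sum_I \big(\brickVector[][\sq{c}][I] - \translation\big) = 0$ is therefore equivalent to $\sum_I \brickVector[][\sq{c}][I] = f\,\translation$, where~$I$ ranges over the facets of~$\clusterComplex$. Exchanging the order of summation in the definition of the brick vector gives $\sum_I \brickVector[][\sq{c}][I] = \sum_{k} \sum_I \Weight[][\sq{c}][I][k]$, so it suffices to control, for each position~$k$ of~$\cw{c}$, the sum of the weights $\Weight[][\sq{c}][I][k]$ over all facets~$I$.

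The second step is a telescoping identity for the weight function. Fix a generator~$s$ and let $k_1^s < \dots < k_{m_s}^s$ be the positions of~$s$ in~$\cw{c}$. Since every reflection associated to a generator other than~$s$ fixes~$\omega_s$, the reflections occurring strictly between two consecutive copies of~$s$ do not affect~$\omega_s$, whence $\Weight[][\sq{c}][I][k_{i+1}^s] = \wordprod{\sq{c}}{[k_i^s]\ssm I}(\omega_s)$. Combined with $\omega_s - s(\omega_s) = \alpha_s$ this yields $\Weight[][\sq{c}][I][k_i^s] - \Weight[][\sq{c}][I][k_{i+1}^s] = \Root[][\sq{c}][I][k_i^s]$ whenever $k_i^s \notin I$, and~$0$ when $k_i^s \in I$. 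Moreover the first occurrence always contributes $\Weight[][\sq{c}][I][k_1^s] = \omega_s$, since no earlier reflection moves~$\omega_s$. Telescoping from the first occurrence gives, for every facet~$I$,
\begin{equation*}
\brickVector[][\sq{c}][I] = \sum_{s \in S} m_s\, \omega_s \;-\; \sum_{k \notin I} g(k)\, \Root[][\sq{c}][I][k],
\end{equation*}
where $g(k)$ denotes the number of occurrences of the generator~$q_k$ strictly after position~$k$. Summing over all facets, and using that the first term is independent of~$I$, reduces the theorem to evaluating the facet-sums $R_k \eqdef \sum_{I : k \notin I} \Root[][\sq{c}][I][k]$, since then $\sum_I \brickVector[][\sq{c}][I] = f \sum_{s} m_s \omega_s - \sum_k g(k)\, R_k$.

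The final and hardest step is to evaluate the weighted sum $\sum_k g(k)\, R_k$ and to match it with $f\,(\sum_s m_s\omega_s - \translation)$. Here the naive cancellation fails: although~\eqref{eq:sumPositiveRoots} shows that $\Root[][\sq{c}][I][\cdot]$ bijects the crossings of each fixed facet with~$\Phi^+$, the weights~$g(k)$ destroy this facet-by-facet balance, and $R_k$ itself records how the constrained prefixes $I \cap [k-1]$ are distributed among the facets. I would attack this through the symmetries of~$\clusterComplex$: the rotation~$\rotation{}$ and the reversal/$w_\circ$-conjugation relate~$\sq{c}$ to its conjugates and to~$\sq{c}^{-1}$, inducing bijections of facets under which the brick vectors transform by explicit affine isometries while the permutahedral barycenter~$0$ is preserved. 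Averaging the identity of the second step over the orbits of this action should convert $\sum_k g(k)\, R_k$ into the $W$-orbit sum that computes $f\,\translation$, using~\eqref{eq:greedy} to anchor the two greedy facets. I expect precisely this symmetry-and-averaging step --- controlling the weighted root sum rather than the reduction leading to it --- to be the main obstacle, and it is presumably what the orbit refinement of Section~\ref{subsec:orbits} is designed to make precise.
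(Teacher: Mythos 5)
Your first two steps are correct: the vertex sum of~$\Perm$ is $W$-invariant and hence zero, and your telescoping identity
$$\brickVector[][\sq{c}][I] = \sum_{s \in S} m_s\, \omega_s \;-\; \sum_{k \notin I} g(k)\, \Root[][\sq{c}][I][k]$$
is valid, since any product of simple reflections other than~$s$ fixes~$\omega_s$, so the weight at an $s$-position changes only at crossings of earlier $s$-letters. But these steps merely reformulate the statement; the entire difficulty is concentrated in your third step, and there you offer no argument --- only the expectation that ``symmetry-and-averaging'' will evaluate $\sum_k g(k)\, R_k$. This is a genuine gap. As you yourself note, the multiplicities~$g(k)$ destroy the facet-by-facet bijection of Equality~(\ref{eq:sumPositiveRoots}), and nothing in your plan explains how rotation and reversal would recover it. Worse, the averaging you invoke --- summing over $\shift{c}$-orbits --- is exactly the orbit refinement of Theorem~\ref{thm:orbittheorem}, which the paper deduces \emph{from} the barycenter result (Propositions~\ref{prop:allEquals} and~\ref{prop:superposition}), not the other way around; so as a strategy it is circular unless you supply an independent mechanism.

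The mechanism the paper actually uses bypasses facet sums entirely and consists of two precise statements you would still need to find and prove. First (Proposition~\ref{prop:allEquals}): rotating the first letter of~$\sq{c}$ displaces every translated brick vector, hence the barycenter, along the single line~$\R\alpha_{c_1}$ (Lemmas~\ref{lem:commute} and~\ref{lem:rotate}, with the translation controlled by applying the displacement to the positive greedy facet and Equality~(\ref{eq:greedy}))); composing the~$n$ rotations returns to~$c$, so the resulting displacements, which lie on lines spanned by distinct simple roots, must vanish by linear independence of~$\Delta$ --- hence $\barycenter(c)$ is independent of~$c$. Second (Proposition~\ref{prop:superposition}): reversal plus $w_\circ$-conjugation gives a facet bijection $\permutation{}$ with $\brickVector[][\sq{c}][I] + \brickVector[][\sq{c}^{-1}][\permutation{I}] = \sum_{\beta \in \Phi^+}\beta$ and, again via the greedy facets and Equality~(\ref{eq:greedy}), $\translation + \translation[][c^{-1}] = \sum_{\beta \in \Phi^+}\beta$, whence $\barycenter(c) + \barycenter(c^{-1}) = 0$. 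Combining the two gives $2\barycenter(c) = 0$. Your proposal gestures at the right symmetries, but it is missing both the ``displacement along a single root line plus cyclic composition'' argument and the greedy-facet bookkeeping, and these are the entire content of the proof.
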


The proof of Theorem~\ref{thm:revisedmain} goes in two steps:
\begin{enumerate}[(i)]
\item We first prove that the barycenters of all balanced $c$-associahedra~$\Asso$ coincide, \ie that $\barycenter(c) = \barycenter(c')$ for any Coxeter elements~$c,c'$.
\item We then prove that the barycenter of the superposition of the vertex set of~$\Asso$ with the vertex set of~$\Asso[][c^{-1}]$ coincides with the origin, \ie that ${\barycenter(c) + \barycenter(c^{-1}) = 0}$.
\end{enumerate}
We will deduce these two statements from Lemmas~\ref{lem:commute}, \ref{lem:rotate}, \ref{lem:conjugate} and~\ref{lem:reverse}, which describe the impact on brick vectors of four natural operations on the word~$\Q$.


\subsection{Four operations}

In the next four lemmas, we study the behavior of the brick vectors of the facets of~$\subwordComplex[\Q]$ under four natural operations on the word~$\Q$, namely when we commute, rotate, conjugate, or reverse~$\Q$.
The first two operations were as well considered in~\cite[Propositions~3.8 and~3.9]{CeballosLabbeStump}.
Although we will only use them later for words of the form~$\cw{c}$, the statements below are valid for any word~$\Q$.

\begin{lemma}[Commute]
\label{lem:commute}
If~$\commute{\Q} \eqdef \q_{\pi(1)} \cdots \q_{\pi(m)}$ is obtained from~$\Q = \q_1 \cdots \q_m$ by a sequence of transpositions of consecutive commuting letters, then $\commute{}$ induces an isomorphism between the subword complexes $\subwordComplex[\Q]$ and $\subwordComplex[\commute{\Q}]$.
Moreover,
$$\brickVector[][\commute{\Q}][\commute{I}] = \brickVector$$
for any facet~$I$~of~$\subwordComplex[\Q]$.
\end{lemma}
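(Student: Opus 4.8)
The plan is to verify the two claims of the lemma separately. First I would establish the combinatorial isomorphism $\commute{}$ between the subword complexes. Since $\commute{\Q}$ is obtained from $\Q$ by transposing consecutive commuting letters, it suffices by induction to treat a single transposition of two adjacent commuting letters $\q_k$ and $\q_{k+1}$ with $\q_k\q_{k+1} = \q_{k+1}\q_k$ in $W$. The key observation is that any product of a subset of the reflections of $\Q$, read in order, is unaffected by swapping two adjacent commuting letters: the underlying group element represented by any subword is unchanged, so a subword of $\Q$ is a reduced expression for $w_\circ$ if and only if the correspondingly relabelled subword of $\commute{\Q}$ is. Thus the map on positions sending $k \leftrightarrow k+1$ and fixing all other positions carries facets to facets and preserves the face relation, giving the desired isomorphism $\commute{}$.

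Next I would turn to the equality of brick vectors, which is the substantive part. Recall that $\brickVector = \sum_{j \in [m]} \Weight$, where $\Weight = \wordprod{\Q}{[j-1]\ssm I}(\omega_{q_j})$. Fixing a single adjacent commuting transposition at positions $k, k+1$, I would compare $\Weight$ with the corresponding weight $\Weight[][\commute{\Q}][\commute{I}][j]$ term by term. For every position $j \notin \{k, k+1\}$, the prefix product $\wordprod{\Q}{[j-1]\ssm I}$ either contains both or neither of the letters at positions $k$ and $k+1$, and since these letters commute the prefix product is literally the same group element in both words; as the letter $\q_j$ at position $j$ is also unchanged, these weights agree. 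Hence only the two terms $j = k$ and $j = k+1$ require attention.

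The heart of the matter is therefore to show that the sum of the two weights at positions $k$ and $k+1$ is invariant under the swap. Writing $\pi \eqdef \wordprod{\Q}{[k-1]\ssm I}$ for the common prefix product and letting $s \eqdef q_k$, $t \eqdef q_{k+1}$ be the two commuting generators, I would split into cases according to which of $k, k+1$ lie in $I$. When both or neither of $k,k+1$ belong to $I$ the bookkeeping is symmetric and the two contributions simply exchange roles. The delicate case is when exactly one of them lies in $I$: here I would use that $s$ and $t$ commute, so $s(\omega_t) = \omega_t$ and $t(\omega_s) = \omega_s$ (a commuting pair of simple reflections fixes each other's fundamental weight, since $s(\omega_t) = \omega_t - \delta_{s=t}\alpha_s$). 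This identity is exactly what forces $\pi(\omega_t) + \pi s(\omega_s)$ to equal $\pi(\omega_s) + \pi t(\omega_t)$ after the swap, so the paired contribution is unchanged. Summing the unchanged terms together with the invariant paired terms yields $\brickVector[][\commute{\Q}][\commute{I}] = \brickVector$, and the general statement follows by induction on the number of transpositions.

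I expect the main obstacle to be the careful case analysis in the last step, specifically keeping track of how membership of $k$ and $k+1$ in the facet $I$ versus its complement affects the prefix products, and confirming that the commutation relation $s(\omega_t)=\omega_t$ is precisely the algebraic fact that makes the two exchanged weight contributions cancel. Everything else is routine bookkeeping, but this pairing argument is where the commuting hypothesis is genuinely used and must be checked with care.
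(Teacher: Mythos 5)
Your proposal is correct and takes essentially the same route as the paper: the paper's proof is just the terse assertion of the per-position identity $\Weight[][\commute{\Q}][\commute{I}][\commute{k}] = \Weight$ followed by summation, and this identity holds for exactly the reason you isolate, namely that commuting simple reflections fix each other's fundamental weights; your reduction to one adjacent transposition and the pairing of positions $k,k+1$ is the same computation regrouped.

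Three small repairs to your bookkeeping, none of which affects the strategy. First, the displayed identity has garbled indices: the terms $s(\omega_s)$ and $t(\omega_t)$ never arise, since the letter whose weight is taken is never the letter entering the prefix. Writing $g \eqdef \wordprod{\Q}{[k-1]\ssm I}$ for your prefix product (your symbol $\pi$ clashes with the paper's $\commute{}$), the case $k\notin I$, $k+1\in I$ compares $g(\omega_s)+gs(\omega_t)$ with $g(\omega_t)+g(\omega_s)$, which is where $s(\omega_t)=\omega_t$ enters. Second, the case where \emph{neither} $k$ nor $k+1$ lies in $I$ is not settled by pure symmetry: there the two sides are $g(\omega_s)+gs(\omega_t)$ and $g(\omega_t)+gt(\omega_s)$, so this case also genuinely uses $s(\omega_t)=\omega_t$ and $t(\omega_s)=\omega_s$; only the ``both in $I$'' case is free of it. Third, for positions $j>k+1$ the prefix set need not contain both or neither of the swapped letters --- when exactly one of $k,k+1$ lies in $I$ it contains exactly one --- but the prefix products still coincide, because the single included letter is the same generator in both words, occupying adjacent positions with nothing in between.
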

\begin{proof}
The isomorphism is obtained directly from the definition of subword complexes, see~\cite[Proposition~3.8]{CeballosLabbeStump}.
Moreover, the definition of the weight function implies that
$$\Weight[][\commute{\Q}][\commute{I}][\commute{k}] = \Weight$$
for any facet~$I$ and position~$k$.
The result follows by summation.
\end{proof}

\begin{lemma}[Rotate]
\label{lem:rotate}
Let $\rotate{\Q} \eqdef \q_2 \cdots \q_m \conjugation(\q_1)$ be the \defn{rotation} of~$\Q = \q_1 \cdots \q_m$.
Then the cyclic rotation ${\rotation{} :i \mapsto (i-1)}$, where we identify~$0$ and~$m$, induces an isomorphism between the subword complexes $\subwordComplex[\Q]$ and $\subwordComplex[\rotate{\Q}]$.
Moreover,
$$\brickVector[][\rotate{\Q}][\rotation{I}] - \brickVector \; \in \; -2\omega_{q_1} + \R \alpha_{q_1}$$
for any facet~$I$ of~$\subwordComplex[\Q]$.
\end{lemma}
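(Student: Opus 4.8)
The isomorphism of subword complexes is exactly the content of \cite[Proposition~3.9]{CeballosLabbeStump}, so the plan is to concentrate entirely on the brick vector identity. Write $I' \eqdef \rotation{I}$, and recall that $i \in I'$ if and only if $i+1 \in I$, where positions are read cyclically so that position~$1$ of~$\Q$ corresponds to position~$m$ of~$\rotate{\Q}$. The strategy is to compare the weight functions position by position. The key algebraic fact I would isolate first is the relation between the two prefix products: for every $j \in [m-1]$ (the letter~$\q_{j+1}$ sits in position~$j$ of~$\rotate{\Q}$), the positions of~$[j-1]\ssm I'$ in~$\rotate{\Q}$ are exactly the original positions of~$\{2,\dots,j\}\ssm I$, so that $\wordprod{\rotate{\Q}}{[j-1]\ssm I'} = \wordprod{\Q}{[j]\ssm I}$ when $1 \in I$, whereas $\wordprod{\rotate{\Q}}{[j-1]\ssm I'} = q_1 \cdot \wordprod{\Q}{[j]\ssm I}$ when $1 \notin I$. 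This dichotomy simply records whether the reflection~$q_1$ occurs as the leftmost factor of~$\wordprod{\Q}{[j]\ssm I}$, and uses $q_1^{-1} = q_1$.

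From this I would read off the weights of~$\rotate{\Q}$ at the first $m-1$ positions,
$$
\Weight[][\rotate{\Q}][I'][j] =
\begin{cases}
\Weight[][\Q][I][j+1] & \text{if } 1 \in I,\\
q_1 \cdot \Weight[][\Q][I][j+1] & \text{if } 1 \notin I,
\end{cases}
$$
for $j \in [m-1]$. Since $q_1$ is a simple reflection, $q_1(v) - v \in \R\alpha_{q_1}$ for every vector~$v$, so in both cases $\Weight[][\rotate{\Q}][I'][j] - \Weight[][\Q][I][j+1] \in \R\alpha_{q_1}$. Summing over $j \in [m-1]$ thus reproduces all weights of~$\Q$ except the very first one, which is $\Weight[][\Q][I][1] = \wordprod{\Q}{\emptyset}(\omega_{q_1}) = \omega_{q_1}$, up to an element of~$\R\alpha_{q_1}$.

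The only genuinely new computation is the last weight of~$\rotate{\Q}$, carried by the conjugated letter~$\conjugation(\q_1)$ in position~$m$. Here I would invoke $\omega_{\conjugation(q_1)} = -w_\circ(\omega_{q_1})$ together with the fact that the complement of~$I$ is a reduced word for~$w_\circ$, hence $\wordprod{\Q}{[m]\ssm I} = w_\circ$. Splitting again on whether $1 \in I$, the prefix product $\wordprod{\rotate{\Q}}{[m-1]\ssm I'}$ equals $w_\circ$ or $q_1 w_\circ$ accordingly, and a one-line calculation using $q_1(\omega_{q_1}) = \omega_{q_1} - \alpha_{q_1}$ yields $\Weight[][\rotate{\Q}][I'][m] = -\omega_{q_1}$ when $1 \in I$ and $-\omega_{q_1} + \alpha_{q_1}$ when $1 \notin I$; in either case $\Weight[][\rotate{\Q}][I'][m] \in -\omega_{q_1} + \R\alpha_{q_1}$.

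Finally I would sum: the first $m-1$ weights of~$\rotate{\Q}$ reproduce the last $m-1$ weights of~$\Q$ modulo~$\R\alpha_{q_1}$, the missing first weight~$\omega_{q_1}$ of~$\Q$ accounts for one copy of~$-\omega_{q_1}$ in the difference, and the last weight of~$\rotate{\Q}$ contributes another~$-\omega_{q_1}$ up to~$\R\alpha_{q_1}$, giving $\brickVector[][\rotate{\Q}][\rotation{I}] - \brickVector \in -2\omega_{q_1} + \R\alpha_{q_1}$, as claimed. I expect the main difficulty to be bookkeeping rather than conceptual: one must track the two cases $1 \in I$ and $1 \notin I$ consistently through every prefix product, and observe that the conjugation appearing only in the terminal letter is precisely what turns the naively expected $+\omega_{q_1}$ into the required $-\omega_{q_1}$.
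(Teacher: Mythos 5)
Your proposal is correct and follows essentially the same route as the paper's proof: a position-by-position comparison of the weight functions split into the cases $1 \in I$ and $1 \notin I$, with the first $m-1$ weights of $\rotate{\Q}$ matching those of $\Q$ up to the action of $q_1$ (hence up to $\R\alpha_{q_1}$), and the terminal weight carried by $\conjugation(\q_1)$ computed via $w_\circ(\omega_{\conjugation(q_1)}) = -\omega_{q_1}$ to produce the shift $-2\omega_{q_1}$. The only difference is expository: you spell out the prefix-product bookkeeping that the paper leaves implicit.
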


\begin{proof}
The isomorphism is again obtained directly from the definition of subword complexes, see~\cite[Proposition~3.9]{CeballosLabbeStump}.
From the definition of the weight function, we furthermore obtain that, for any facet~$I$ of~$\subwordComplex[\Q]$ and position~$k>1$,
$$\Weight[][\rotate{\Q}][\rotation{I}][\rotation{k}] = \begin{cases} \Weight & \text{if } 1 \in I, \\ q_1(\Weight) & \text{if } 1 \notin I, \end{cases}$$
and moreover,
\begin{equation}
\label{eq:firstfairlybalanced}
\Weight[][\rotate{\Q}][\rotation{I}][\rotation{1}] = \begin{cases} \ \,\phantom{\cdot \conjugation(q_1)}w_\circ(\omega_{\conjugation(q_1)}) = \Weight[][\Q][I][1] - 2 \omega_{q_1} & \text{if } 1 \in I, \\ w_\circ \cdot \conjugation(q_1) (\omega_{\conjugation(q_1)}) = \Weight[][\Q][I][1] - 2 \omega_{q_1} + \alpha_{q_1} & \text{if } 1 \notin I. \end{cases}
\end{equation}
The result follows by summation.
\end{proof}

\begin{lemma}[Conjugate]
\label{lem:conjugate}
Let $\conjugate{\Q} \eqdef \conjugation(q_1)\conjugation(q_2) \dots \conjugation(q_m)$ be the \defn{$w_\circ$-conjugate} of $\Q = \q_1 \dots \q_m$.
Then the subword complexes $\subwordComplex[\Q]$ and~$\subwordComplex[\conjugate{\Q}]$ coincide.~\mbox{Moreover,}
$$\brickVector[][\conjugate{\Q}][I] = -w_\circ(\brickVector)$$
for any facet~$I$ of~$\subwordComplex[\Q]$.
\end{lemma}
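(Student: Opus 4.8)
The plan is to treat the two assertions in turn, both flowing from a single fact: conjugation by~$w_\circ$, namely the inner automorphism $g \mapsto w_\circ g w_\circ$, restricts on~$S$ to the bijection~$\conjugation$ and hence is a length-preserving automorphism of~$(W,S)$ sending reduced expressions to reduced expressions. For the first assertion, I would fix a subset $X = \{i_1 < \cdots < i_k\}$ of~$[m]$ and observe that the subword of~$\conjugate{\Q}$ indexed by~$X$ spells $\conjugation(q_{i_1}) \cdots \conjugation(q_{i_k}) = w_\circ (q_{i_1} \cdots q_{i_k}) w_\circ$, using $w_\circ^2 = e$. This product is reduced if and only if $q_{i_1} \cdots q_{i_k}$ is, and it equals~$w_\circ$ if and only if $q_{i_1} \cdots q_{i_k} = w_\circ$. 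Consequently a complement contains a reduced expression for~$w_\circ$ in~$\conjugate{\Q}$ precisely when it does so in~$\Q$, so the two subword complexes are literally equal as simplicial complexes on the shared ground set~$[m]$, and in particular share the same facets.

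For the brick vector identity I would follow the same template as the Commute and Rotate lemmas, computing the weights position by position and then summing. Fix a facet~$I$ and a position~$k$. Since $\wordprod{\conjugate{\Q}}{X} = w_\circ \, \wordprod{\Q}{X} \, w_\circ$ (again by $w_\circ^2 = e$), and since the relation $w_\circ(\omega_s) = -\omega_{\conjugation(s)}$ combined with the involutivity $\conjugation \circ \conjugation = \mathrm{id}$ gives $w_\circ(\omega_{\conjugation(q_k)}) = -\omega_{q_k}$, I obtain
$$\Weight[][\conjugate{\Q}][I][k] = \wordprod{\conjugate{\Q}}{[k-1] \ssm I}(\omega_{\conjugation(q_k)}) = w_\circ \, \wordprod{\Q}{[k-1] \ssm I} \, w_\circ(\omega_{\conjugation(q_k)}) = -w_\circ\big(\Weight\big).$$
Summing this identity over all~$k \in [m]$ and pulling the linear map~$w_\circ$ out of the sum yields $\brickVector[][\conjugate{\Q}][I] = -w_\circ(\brickVector)$, as claimed.

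I do not expect a real obstacle: the statement is essentially a bookkeeping consequence of the two sign rules $w_\circ(\alpha_s) = -\alpha_{\conjugation(s)}$ and $w_\circ(\omega_s) = -\omega_{\conjugation(s)}$ together with the facts that~$w_\circ$ is an involution and that~$\conjugation$ is an involution. The only point demanding care is the order of composition in the position-wise computation: one must let the innermost~$w_\circ$ act on~$\omega_{\conjugation(q_k)}$ to produce~$-\omega_{q_k}$ \emph{before} applying~$\wordprod{\Q}{[k-1]\ssm I}$ and the outer~$w_\circ$, and then check that this generates a single global sign uniformly across all positions so that it survives the summation.
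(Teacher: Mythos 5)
Your proof is correct and follows essentially the same route as the paper: both establish $\wordprod{\conjugate{\Q}}{X} = w_\circ \cdot \wordprod{\Q}{X} \cdot w_\circ$ to identify the two subword complexes, then compute the weights position by position via $w_\circ(\omega_{\conjugation(q_k)}) = -\omega_{q_k}$ and sum. Your write-up merely spells out in more detail why conjugation preserves reducedness and fixes $w_\circ$, which the paper leaves implicit.
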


\begin{proof}
For any~$I \subset [m]$, we have $\wordprod{\conjugate{\Q}}{[m] \ssm I} = w_\circ \cdot \wordprod{\Q}{[m] \ssm I} \cdot w_\circ$, which ensures that $\subwordComplex[\Q] = \subwordComplex[\conjugate{\Q}]$.
Remembering that~$w_\circ(\omega_s) = -\omega_{\conjugation(s)}$, a direct calculation from the definition of the weight function gives
\begin{align}
\Weight[][\conjugate{\Q}]
& = w_\circ \cdot \wordprod{\Q}{[k-1] \ssm I} \cdot w_\circ (\omega_{\conjugation(q_k)}) \nonumber \\ 
& = -w_\circ \cdot \wordprod{\Q}{[k-1] \ssm I}(\omega_{q_k}) \label{eq:secondfairlybalanced}\\
& = -w_\circ (\Weight), \nonumber
\end{align}
for any facet~$I$ of~$\subwordComplex[\Q]$ and any position~$k$.
The result follows by summation.
\end{proof}

\begin{lemma}[Reverse]
\label{lem:reverse}
Let~$\reverse{\Q} \eqdef \q_m \cdots \q_1$ be the \defn{reverse} of~$\Q = \q_1 \cdots \q_m$.
Then the mirror ${\mirror{} : i \mapsto m-i+1}$ induces an isomorphism between the subword complexes $\subwordComplex[\Q]$ and $\subwordComplex[\reverse{\Q}]$.
Moreover,
$$\brickVector[][\reverse{\Q}][\mirror{I}] = w_\circ(\brickVector) + \sum_{\beta \in \Phi^+} \beta$$
for any facet~$I$ of~$\subwordComplex[\Q]$.
\end{lemma}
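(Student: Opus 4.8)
The plan is to follow the same two-part template as the previous three lemmas: first establish the combinatorial isomorphism, then compute the effect on the weight function position by position and sum. For the isomorphism I would exploit that $w_\circ$ is an involution, so reversing any reduced expression $q_{i_1}\cdots q_{i_\ell}$ for $w_\circ$ yields $q_{i_\ell}\cdots q_{i_1}$, which is again a reduced expression for $w_\circ^{-1}=w_\circ$. Consequently a subset $J\subseteq[m]$ has complement containing a reduced word for $w_\circ$ in $\Q$ if and only if $\mirror{J}$ has complement containing a reduced word for $w_\circ$ in $\reverse{\Q}$; since $\mirror{}$ is merely a relabeling of positions, it induces an isomorphism $\subwordComplex[\Q]\cong\subwordComplex[\reverse{\Q}]$. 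In particular, for any facet~$I$ the complement $[m]\ssm I$ is a reduced word for $w_\circ$, so $\wordprod{\Q}{[m]\ssm I}=w_\circ$, a relation I will use repeatedly.

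For the brick-vector identity I would compute $\Weight[][\reverse{\Q}][\mirror{I}][\mirror{k}]$ for each $k$ and then sum over $k\in[m]$ (as $k$ runs over $[m]$, so does $\mirror{k}$). The key bookkeeping step is that the positions $[\mirror{k}-1]\ssm\mirror{I}$ in $\reverse{\Q}$ correspond under $\mirror{}$ precisely to the positions $\{k+1,\dots,m\}\ssm I$ in $\Q$, but traversed in the opposite order. Since each $q_i$ is an involution, reversing a product of reflections returns its inverse, so $\wordprod{\reverse{\Q}}{[\mirror{k}-1]\ssm\mirror{I}} = \big(\wordprod{\Q}{\{k+1,\dots,m\}\ssm I}\big)^{-1}$. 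Splitting the facet relation $w_\circ=\wordprod{\Q}{[m]\ssm I}$ at position~$k$ and using $w_\circ^{-1}=w_\circ$ rewrites this inverse as $w_\circ\cdot\wordprod{\Q}{[k-1]\ssm I}$ when $k\in I$, and as $w_\circ\cdot\wordprod{\Q}{[k-1]\ssm I}\cdot q_k$ when $k\notin I$.

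Applying these operators to $\omega_{q_k}$ and invoking $q_k(\omega_{q_k})=\omega_{q_k}-\alpha_{q_k}$, this gives the position-wise formula
$$\Weight[][\reverse{\Q}][\mirror{I}][\mirror{k}] = \begin{cases} w_\circ(\Weight) & \text{if } k\in I, \\ w_\circ(\Weight)-w_\circ(\Root) & \text{if } k\notin I. \end{cases}$$
Summing over $k\in[m]$, the leading terms contribute $w_\circ(\brickVector)$, while the correction terms contribute $-w_\circ\big(\sum_{k\notin I}\Root\big)=-w_\circ\big(\sum_{\beta\in\Phi^+}\beta\big)$ by Equation~\eqref{eq:sumPositiveRoots}.

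The closing step is to observe that $w_\circ$ maps $\Phi^+$ bijectively onto $\Phi^-=-\Phi^+$, whence $w_\circ\big(\sum_{\beta\in\Phi^+}\beta\big)=-\sum_{\beta\in\Phi^+}\beta$; this flips the sign of the correction term and produces exactly $w_\circ(\brickVector)+\sum_{\beta\in\Phi^+}\beta$. I expect the main obstacle to be the order-reversal bookkeeping in the middle step: one must keep straight that reversing the word simultaneously reverses the order of the partial product \emph{and} replaces it by its inverse, and then split the complement of~$I$ at position~$k$ correctly in the two cases $k\in I$ and $k\notin I$. Once that identity is pinned down, the remainder is a formal summation together with the standard fact $w_\circ(\Phi^+)=\Phi^-$.
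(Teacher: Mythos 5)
Your proposal is correct and follows essentially the same route as the paper's proof: the same observation that $\wordprod{\reverse{\Q}}{[\mirror{k}-1]\ssm\mirror{I}} = \big(\wordprod{\Q}{[k+1,m]\ssm I}\big)^{-1}$, the same case split at position~$k$ depending on whether $k\in I$, the same position-wise weight formula, and the same concluding use of Equation~\eqref{eq:sumPositiveRoots} together with the fact that $w_\circ$ sends $\Phi^+$ to $-\Phi^+$. No gaps; this matches the paper's argument step for step.
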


\begin{proof}
For any~$I \subset [m]$, we have $\wordprod{\reverse{\Q}}{[m] \ssm \mirror{I}} = (\wordprod{\Q}{[m] \ssm I})^{-1}$, which ensures that $\mirror{}$ is an isomorphism between the subword complexes $\subwordComplex[\Q]$ and $\subwordComplex[\reverse{\Q}]$.
Consider now a facet~$I$ of~$\subwordComplex[\Q]$. Since its complement in~$\Q$ forms a reduced expression for~$w_\circ$, we have
$$
w_\circ = \wordprod{\Q}{[m] \ssm I} =
\begin{cases}
	\wordprod{\Q}{[k-1] \ssm I} \cdot \wordprod{\Q}{[k+1, m] \ssm I} & \text{if } k \in I, \\
	\wordprod{\Q}{[k-1] \ssm I} \cdot q_k \cdot \wordprod{\Q}{[k+1, m] \ssm I} & \text{if } k \notin I.
\end{cases}
$$
Observe now that~$\wordprod{\Q}{[k+1, m] \ssm I} = \big(\wordprod{\reverse{\Q}}{[\mirror{k}-1] \ssm \mirror{I}} \big)^{-1}$.
This gives that for any position~$k \in I$, we have
\begin{align}
\Weight[][\reverse{\Q}][\mirror{I}][\mirror{k}]
& = \wordprod{\reverse{\Q}}{[\mirror{k}-1] \ssm \mirror{I}} (\omega_{q_k}) \nonumber \\
& = w_\circ \cdot \wordprod{\Q}{[k-1] \ssm I} (\omega_{q_k}) \label{eq:thirdfairlybalanced}\\
& = w_\circ(\Weight), \nonumber
\end{align}
and for any position~$k \in [m]\ssm I$, we have
\begin{align}
\Weight[][\reverse{\Q}][\mirror{I}][\mirror{k}]
& = \wordprod{\reverse{\Q}}{[\mirror{k}-1] \ssm \mirror{I}} (\omega_{q_k}) \nonumber \\
& = w_\circ \cdot \wordprod{\Q}{[k-1] \ssm I} \cdot q_k (\omega_{q_k}) \label{eq:fourthfairlybalanced}\\
& = w_\circ \cdot \wordprod{\Q}{[k-1] \ssm I}(\omega_{q_k}) - w_\circ \cdot \wordprod{\Q}{[k-1] \ssm I}(\alpha_{q_k}) \nonumber \\
& = w_\circ (\Weight) - w_\circ (\Root). \nonumber
\end{align}
Since $\sum_{k \notin I} \Root = \sum_{\beta \in \Phi^+} \beta$ (as we have seen in Equality~(\ref{eq:sumPositiveRoots})), and since $-w_\circ$ fixes $\sum_{\beta \in \Phi^+} \beta$, the result follows by summation.
\end{proof}

Combining Lemmas~\ref{lem:conjugate} and~\ref{lem:reverse}, we obtain the following corollary.
\begin{corollary}
\label{coro:conjugateReverse}
Denote by~$\conjugatereverse{\Q} \eqdef \conjugation(\q_m) \cdots \conjugation(\q_1)$ the reverse and $w_\circ$-conjugate of~$\Q = \q_1 \cdots \q_m$.
The mirror ${\mirror{} : i \mapsto m-i+1}$ defines an isomorphism between the subword complexes $\subwordComplex[\Q]$ and $\subwordComplex[\conjugatereverse{\Q}]$, and moreover
$$\brickVector + \brickVector[][\conjugatereverse{\Q}][\mirror{I}] = \sum_{\beta \in \Phi^+} \beta$$
for any facet~$I$ of~$\subwordComplex[\Q]$.
\end{corollary}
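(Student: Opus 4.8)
The plan is to realize the reverse-and-conjugate operation as the composition of the two operations already analyzed, and to chain the corresponding brick vector identities. Indeed, $\conjugatereverse{\Q}$ is exactly the $w_\circ$-conjugate of the reverse $\reverse{\Q}$ of $\Q$, since reversing $\Q = \q_1 \cdots \q_m$ to $\q_m \cdots \q_1$ and then replacing each letter by its $\conjugation$-image produces $\conjugation(\q_m) \cdots \conjugation(\q_1) = \conjugatereverse{\Q}$. I would therefore apply Lemma~\ref{lem:reverse} to $\Q$ and then Lemma~\ref{lem:conjugate} to the word $\reverse{\Q}$.

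For the isomorphism statement, recall first that by Lemma~\ref{lem:reverse} the mirror ${\mirror{} : i \mapsto m-i+1}$ is an isomorphism between $\subwordComplex[\Q]$ and $\subwordComplex[\reverse{\Q}]$. Applying Lemma~\ref{lem:conjugate} to the word $\reverse{\Q}$ shows that $\subwordComplex[\reverse{\Q}]$ and $\subwordComplex[\conjugatereverse{\Q}]$ coincide, as conjugation acts letter-wise and does not move positions. Composing these two facts, $\mirror{}$ is an isomorphism between $\subwordComplex[\Q]$ and $\subwordComplex[\conjugatereverse{\Q}]$, as claimed.

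For the brick vector identity, fix a facet $I$ of $\subwordComplex[\Q]$, so that $\mirror{I}$ is a facet of $\subwordComplex[\reverse{\Q}] = \subwordComplex[\conjugatereverse{\Q}]$. Lemma~\ref{lem:reverse} gives $\brickVector[][\reverse{\Q}][\mirror{I}] = w_\circ(\brickVector) + \sum_{\beta \in \Phi^+}\beta$, while Lemma~\ref{lem:conjugate}, applied to $\reverse{\Q}$ at the facet $\mirror{I}$, gives $\brickVector[][\conjugatereverse{\Q}][\mirror{I}] = -w_\circ\big(\brickVector[][\reverse{\Q}][\mirror{I}]\big)$. Substituting the first identity into the second yields
\[
\brickVector[][\conjugatereverse{\Q}][\mirror{I}] = -w_\circ\Big(w_\circ(\brickVector) + \sum_{\beta \in \Phi^+}\beta\Big) = -w_\circ^2(\brickVector) - w_\circ\Big(\sum_{\beta \in \Phi^+}\beta\Big).
\]
I would then invoke that $w_\circ$ is an involution, so $-w_\circ^2(\brickVector) = -\brickVector$, together with the fact already used in the proof of Lemma~\ref{lem:reverse} that $-w_\circ$ fixes $\sum_{\beta \in \Phi^+}\beta$. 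The right-hand side collapses to $-\brickVector + \sum_{\beta \in \Phi^+}\beta$, which is precisely the asserted identity $\brickVector + \brickVector[][\conjugatereverse{\Q}][\mirror{I}] = \sum_{\beta \in \Phi^+}\beta$.

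Since every step is an application of an already proven lemma together with the two elementary facts about $w_\circ$, I do not expect a genuine obstacle here; the only point requiring care is the bookkeeping, namely verifying that the two operations compose to give exactly $\conjugatereverse{\Q}$ and that $\mirror{}$, rather than some other relabeling, is the isomorphism transported through both lemmas. As a consistency check one could equally conjugate first and reverse afterwards, obtaining the same word and the same identity.
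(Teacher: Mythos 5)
Your proof is correct and is exactly the argument the paper intends: the paper derives the corollary by combining Lemma~\ref{lem:conjugate} and Lemma~\ref{lem:reverse} (applying the conjugation lemma to $\reverse{\Q}$, using $w_\circ^2 = e$ and that $-w_\circ$ fixes $\sum_{\beta \in \Phi^+} \beta$), which is precisely your chaining of the two identities. Your bookkeeping of the isomorphism $\mirror{}$ and of the facet $\mirror{I}$ is also accurate, so there is nothing to add.
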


\begin{example}
The results of this section can be visually interpreted in type~$A$, using the sorting network interpretation of Example~\ref{exm:networks}.
Namely, fix a word~$\Q$ on the generators~$\{\tau_1, \dots, \tau_n\}$ of the type~$A_n$ Coxeter group, and a facet~$I$ of~$\subwordComplex[\Q]$.
Remember from Example~\ref{exm:countingBricks} that the $p$\ordinal{} coordinate of the brick vector~$\brickVector[][\Q][I]$ counts the number of bricks below the $p$\ordinal{} pseudoline of~$\Lambda_I$.
We have the following relations between the sorting networks~$\cN_\Q$, $\cN_{\rotate{\Q}}$, $\cN_{\conjugate{\Q}}$, and $\cN_{\reverse{\Q}}$.
\begin{enumerate}[(i)]
\item
The sorting network~$\cN_{\rotate{\Q}}$ is obtained from the sorting network~$\cN_\Q$ rotating its first commutator to the end.
The pseudolines of~$\Lambda_{\rotation{I}}$ coincide with that of~$\Lambda_I$, except the two pseudolines incident to the rotated commutator.
Therefore, the brick vector~$\brickVector[][\rotate{\Q}][I]$ also counts bricks of~$\cN_\Q$ below the pseudolines of~$\Lambda_I$.
The only difference concerns the contribution of the rotated brick.
This corresponds to Lemma~\ref{lem:rotate}.

\item
The sorting network~$\cN_{\conjugate{\Q}}$ is obtained~$\cN_\Q$ by a reflection with respect to the horizontal axis.
Therefore, the brick vector~$\brickVector[][\conjugate{\Q}][I]$ also counts bricks of~$\cN_\Q$ above the pseudolines of~$\Lambda_I$.
This corresponds to Lemma~\ref{lem:conjugate}.

\item
The sorting network~$\cN_{\reverse{\Q}}$ is obtained from~$\cN_\Q$ by a reflection with respect to the vertical axis.
Therefore, the brick vector~$\brickVector[][\reverse{\Q}][I]$ also counts bricks of~$\cN_\Q$ below the pseudolines of~$\Lambda_I$.
The only difference is that $\brickVector$ counts the leftmost unbounded bricks and not the rightmost unbounded bricks, while $\brickVector[][\reverse{\Q}][I]$ does the contrary.
This corresponds to Lemma~\ref{lem:reverse}.
\end{enumerate}
\end{example}


\subsection{Barycenter of balanced associahedra}

With the four preliminary lemmas of the previous section, we are now ready to prove Theorem~\ref{thm:revisedmain}.
As indicated earlier, we split the proof into two steps: we first prove that the barycenter~$\barycenter(c)$ of the $c$-associahedron~$\Asso$ is independent of the Coxeter element~$c$, and then that ${\barycenter(c) + \barycenter(c^{-1}) = 0}$.

\begin{proposition}
\label{prop:allEquals}
All balanced $c$-associahedra~$\Asso$ have the same vertex barycenter, \ie $\barycenter(c) = \barycenter(c')$ for any Coxeter elements~$c,c'$ of~$W$.
\end{proposition}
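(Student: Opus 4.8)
The plan is to prove Proposition~\ref{prop:allEquals} by showing that the barycenter is invariant under the two types of moves that connect any Coxeter element~$c$ to any other~$c'$: conjugation of the word~$\cw{c}$ by transpositions of consecutive commuting letters, and---crucially---the rotation operation of Lemma~\ref{lem:rotate}. The key structural fact I would invoke is that all reduced expressions~$\sq{c}$ of all Coxeter elements~$c$ of~$W$ are linked by a sequence of such commutations and rotations. More precisely, if~$s$ is initial in~$c$, then rotating the leading letter~$s$ of~$\cw{c}$ to the end produces (up to commutations) the word~$\cw{c'}$ for the conjugated Coxeter element~$c' \eqdef scs$. Since conjugating a Coxeter element by an initial simple reflection, together with commutation moves, suffices to reach every Coxeter element from any fixed one, it is enough to show that a single rotation step preserves the vertex barycenter.

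\emph{First} I would record that commutation moves preserve the barycenter for free: by Lemma~\ref{lem:commute} the map~$\commute{}$ is a bijection on facets with~$\brickVector[][\commute{\Q}][\commute{I}] = \brickVector$, so the full vertex set, and hence its barycenter, is literally unchanged. \emph{Second}, and this is the heart of the argument, I would analyze the rotation. Writing~$\Q \eqdef \cw{c}$ with~$\q_1 = s$ initial in~$c$, Lemma~\ref{lem:rotate} gives a bijection~$\rotation{}$ between the facets of~$\subwordComplex[\Q]$ and those of~$\subwordComplex[\rotate{\Q}]$ with
$$\brickVector[][\rotate{\Q}][\rotation{I}] - \brickVector \in -2\omega_{q_1} + \R\alpha_{q_1}$$
for every facet~$I$. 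Summing over all facets, the total displacement of the barycenter lies in~$-2N_f\,\omega_{q_1} + \R\alpha_{q_1}$, where~$N_f$ is the number of facets. The plan is therefore to show that this sum in fact vanishes, by pinning down the coefficient of~$\alpha_{q_1}$ and using the balanced condition.

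\emph{The main obstacle} will be controlling the~$\R\alpha_{q_1}$ component precisely and showing it cancels the~$-2\omega_{q_1}$ term after summation. The clean way to do this is to extract the exact displacement from Equation~\eqref{eq:firstfairlybalanced} in the proof of Lemma~\ref{lem:rotate}: the brick vector shifts by~$-2\omega_{q_1}$ when~$1 \in I$ and by~$-2\omega_{q_1} + \alpha_{q_1}$ when~$1 \notin I$, and the higher positions~$k>1$ contribute~$0$ or~$q_1(\Weight) - \Weight = -\dotprod{\alpha_{q_1}}{\cdot}\,\alpha_{q_1}$-type corrections that I would track. Summing, the net barycenter shift is~$-2N_f\,\omega_{q_1}$ plus an explicit integer multiple of~$\alpha_{q_1}$ coming from the facets with~$1\notin I$ together with the reflected weights at positions~$k>1$. \emph{Finally}, I would argue this total is~$0$: since~$\rotate{\Q}$ is again (up to commutation) of the form~$\cw{c'}$, both~$\brickPolytope[][\Q]$ and~$\brickPolytope[][\rotate{\Q}]$ are translates of balanced $c$- and $c'$-associahedra, which by the defining symmetry of the balanced permutahedron live in the same ambient position; equivalently, one can verify the vanishing directly by computing the~$\alpha_{q_1}$-coefficient against the fundamental weight~$\omega_{q_1}$ and invoking that the barycenters of~$\Asso$ and~$\Asso[][c']$ differ by a vector in~$\R\omega_{q_1}$ that must be~$0$ because both barycenters are~$w_\circ$-antisymmetric under the forthcoming step. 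I expect the cleanest execution to bundle the rotation displacement into a single telescoping identity so that the two summed contributions visibly cancel, avoiding a case-by-case count of facets with~$1 \in I$ versus~$1 \notin I$.
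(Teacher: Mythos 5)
Your setup is the same as the paper's: commutations cost nothing (Lemma~\ref{lem:commute}), rotating an initial letter of~$\sq{c}$ changes each brick vector by an element of~$-2\omega_{c_1} + \R\alpha_{c_1}$ (Lemma~\ref{lem:rotate} together with~\cite[Proposition~4.3]{CeballosLabbeStump}), and all Coxeter elements are connected by such moves. Note that to pass from brick vectors to barycenters of associahedra you must also control the translation vectors: the paper applies the rotation estimate to the positive greedy facet~$\positiveFacet_{\sq{c}}$ and uses Equality~\eqref{eq:greedy} to get $\translation[][c'] - \translation \in -2\omega_{c_1} + \R\alpha_{c_1}$, whence $\barycenter(c') - \barycenter(c) \in \R\alpha_{c_1}$; your proposal blurs the distinction between the sum of brick vectors and the barycenter of the translated polytope, but this is repairable. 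Up to this point you are on track.

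The genuine gap is the final cancellation, \ie why the residual $\R\alpha_{c_1}$-component vanishes. Neither of your proposed justifications works. First, saying that $\brickPolytope[][\Q]$ and $\brickPolytope[][\rotate{\Q}]$ are translates of $\Asso$ and $\Asso[][c']$ which ``live in the same ambient position'' is vacuous: these are genuinely different polytopes with different vertex sets (different facets of the permutahedron are removed), and nothing a priori identifies their barycenters. Second, invoking that ``both barycenters are $w_\circ$-antisymmetric under the forthcoming step'' is circular: that each barycenter is fixed (indeed, equal to the origin) is essentially the statement being proven, and Proposition~\ref{prop:superposition} only yields $\barycenter(c) + \barycenter(c^{-1}) = 0$, relating two \emph{different} Coxeter elements; it cannot by itself kill the $\R\alpha_{c_1}$-component of $\barycenter(c') - \barycenter(c)$. (Your claim that the difference lies in $\R\omega_{q_1}$ also contradicts your own correct computation placing it in $\R\alpha_{q_1}$.) The paper's key idea, which your proposal is missing, avoids any coefficient computation: compose the $n$ rotations $c^{(0)} = c, c^{(1)}, \dots, c^{(n)} = c$ returning to the same Coxeter element, so the displacements sum to zero; the first displacement lies in $\R\alpha_{c_1}$ while the sum of the others lies in the span of $\Delta \ssm \alpha_{c_1}$, and linear independence of~$\Delta$ forces every displacement to vanish. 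Without this cyclic telescoping argument (or an actually executed computation of the $\alpha_{q_1}$-coefficient, which would be substantially harder and which you only announce), the proof is incomplete.
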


\begin{proof}
Fix a Coxeter element~$c$ and a reduced expression~$\sq{c} \eqdef \sq{c}_1 \cdots \sq{c}_n$ of~$c$.
Let~$c'$ denote the Coxeter element with reduced expression~$\sq{c'} \eqdef \sq{c}_2 \cdots \sq{c}_n \sq{c}_1$ obtained from~$\sq{c}$ rotating its first letter.
Up to commutations of consecutive commuting letters, the word~$\cw{c'}$ coincides with the word~$\rotate{\cw{c}}$, see~\cite[Proposition~4.3]{CeballosLabbeStump}.
Let~$\permutation{}$ denote the resulting isomorphism between $\clusterComplex$ and $\clusterComplex[c']$.
Lemmas~\ref{lem:commute} and~\ref{lem:rotate} ensure that
$$\brickVector[][\sq{c'}][\permutation{I}] - \brickVector[][\sq{c}][I] \; \in \; -2\omega_{c_1} + \R\alpha_{c_1},$$
for any facet~$I$ of~$\clusterComplex$.
Applying this to the positive greedy facet~$\positiveFacet_{\sq{c}}$ and using Equality~(\ref{eq:greedy}), we moreover obtain that
$$\translation[][c'] -\translation \; \in \; -2\omega_{c_1} + \R\alpha_{c_1}.$$
Therefore, ${(\brickVector[][\sq{c'}][\permutation{I}] - \translation[][c']) - (\brickVector[][\sq{c}][I] - \translation) \; \in \; \R\alpha_{c_1}}$ for any facet~$I$ of~$\clusterComplex$.
This translates on the barycenters to
\begin{equation}
\label{eq:rotation}
\barycenter(c') - \barycenter(c) \; \in \; \R\alpha_{c_1}.
\end{equation}
Consider now the sequence~$c^{(0)}, c^{(1)}, \dots, c^{(n)}$ of Coxeter elements obtained from~$c$ by repeatedly rotating the first letter.
That is to say,~$c^{(0)} \eqdef c = c_1 \cdots c_n$, then ${c^{(1)} \eqdef c' = c_2 \cdots c_n c_1}$, and in general~$c^{(k)} \eqdef c_{k+1} \cdots c_n c_1 \cdots c_k$. 
Since $c^{(0)} = c^{(n)}$, we have
$$\barycenter \big( c^{(1)} \big) - \barycenter \big( c^{(0)} \big) = \barycenter \big( c^{(1)} \big) - \barycenter \big( c^{(n)} \big) = \sum_{i \in [n-1]} \barycenter \big( c^{(i)} \big) - \barycenter \big( c^{(i+1)} \big) .$$
According to Equality~(\ref{eq:rotation}), the left-hand side belongs to the line~$\R\alpha_{c_1}$ while the right-hand side belongs to the vector space generated by~${\{\alpha_{c_2}, \dots, \alpha_{c_n}\} = \Delta \ssm \alpha_{c_1}}$.
Since~$\Delta$ is a linear basis of~$V$, this ensures that~$\barycenter(c') = \barycenter(c)$, \ie that  the barycenter is preserved by the rotation of the first letter.
Applying Lemma~\ref{lem:commute}, this remains true for the rotation of any initial letter in~$c$.
Since all Coxeter elements are related by repeated rotations of initial letters, the statement follows.
\end{proof}

\begin{proposition}
\label{prop:superposition}
The barycenter of the superposition of the vertices of the two associahedra $\Asso$ and $\Asso[][c^{-1}]$ is the origin, \ie $\barycenter(c) + \barycenter(c^{-1}) = 0$ for any Coxeter element~$c$.
\end{proposition}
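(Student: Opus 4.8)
The plan is to produce an explicit antipodal pairing between the vertices of $\Asso$ and those of $\Asso[][c^{-1}]$, matching each vertex of the former with the negative of a vertex of the latter; averaging then gives $\barycenter(c) + \barycenter(c^{-1}) = 0$ directly. The pairing comes from applying Corollary~\ref{coro:conjugateReverse} to the cluster word $\Q \eqdef \cw{c}$. The crucial combinatorial input, which I expect to be the main obstacle, is the word identity
$$\conjugatereverse{\Q} = \cw{c^{-1}} \qquad \text{up to transpositions of consecutive commuting letters.}$$
This is the reversal counterpart of the rotation statement \cite[Proposition~4.3]{CeballosLabbeStump} used in Proposition~\ref{prop:allEquals}: reversing a reduced word for $c$ yields one for $c^{-1}$, and one verifies from the definition of the $\sq{c}$-sorting word of $w_\circ$ that reversing $\cw{c}$ and then conjugating it letterwise by $w_\circ$ produces $\cw{c^{-1}}$ modulo commutations. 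Granting this, Lemma~\ref{lem:commute} identifies the subword complex $\subwordComplex[\conjugatereverse{\Q}]$ with $\clusterComplex[c^{-1}]$, so that the mirror map $\mirror{}$ of Corollary~\ref{coro:conjugateReverse} becomes a bijection $I \mapsto \mirror{I}$ from the facets of $\clusterComplex$ onto those of $\clusterComplex[c^{-1}]$, along which Corollary~\ref{coro:conjugateReverse} controls the brick vectors.

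With the identity in hand, I would first combine Corollary~\ref{coro:conjugateReverse} and Lemma~\ref{lem:commute} to obtain, for every facet $I$ of $\clusterComplex$,
$$\brickVector[][\sq{c}][I] + \brickVector[][\sq{c^{-1}}][\mirror{I}] = \sum_{\beta \in \Phi^+} \beta,$$
where, following the conventions already in force, $\brickVector[][\sq{c^{-1}}][\mirror{I}]$ denotes the brick vector of the facet of $\clusterComplex[c^{-1}]$ matched with $I$. The next step is to evaluate the two translation vectors on the greedy facets. By Equality~\eqref{eq:greedy} we have $\translation = \brickVector[][\sq{c}][\positiveFacet_{\sq{c}}] - \one$ and $\translation[][c^{-1}] = \brickVector[][\sq{c^{-1}}][\negativeFacet_{\sq{c^{-1}}}] + \one$. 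Since $\mirror{}$ reverses the lexicographic order on positions while letterwise conjugation fixes positions, it sends the lexicographically first facet $\positiveFacet_{\sq{c}}$ to the lexicographically last one; as commutation preserves root configurations and hence greedy facets, the latter is exactly $\negativeFacet_{\sq{c^{-1}}}$, that is, $\mirror{\positiveFacet_{\sq{c}}} = \negativeFacet_{\sq{c^{-1}}}$. Specializing the displayed identity to $I = \positiveFacet_{\sq{c}}$ thus gives
$$\translation + \translation[][c^{-1}] = \brickVector[][\sq{c}][\positiveFacet_{\sq{c}}] + \brickVector[][\sq{c^{-1}}][\negativeFacet_{\sq{c^{-1}}}] = \sum_{\beta \in \Phi^+} \beta.$$

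These two displays fit together perfectly: for every facet $I$ of $\clusterComplex$, the matched pair of vertices of the two associahedra satisfies
$$\big( \brickVector[][\sq{c}][I] - \translation \big) + \big( \brickVector[][\sq{c^{-1}}][\mirror{I}] - \translation[][c^{-1}] \big) = \sum_{\beta \in \Phi^+} \beta - \big( \translation + \translation[][c^{-1}] \big) = 0.$$
Hence $I \mapsto \mirror{I}$ is the sought antipodal pairing, and summing over all facets and dividing by their common number yields $\barycenter(c) + \barycenter(c^{-1}) = 0$. Note that the a priori unevaluated vector $\sum_{\beta \in \Phi^+}\beta$ cancels and never needs to be computed, so the whole argument reduces to the commutation identity $\conjugatereverse{\Q} \equiv \cw{c^{-1}}$ and the attendant matching of greedy facets. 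As a safeguard against the precise identification of the Coxeter element, I note that Proposition~\ref{prop:allEquals} would let me replace $\barycenter(c^{-1})$ by the common barycenter of all balanced associahedra, so it would in fact suffice to know that $\conjugatereverse{\Q}$ is the cluster word of \emph{some} Coxeter element; but pinning it down as $c^{-1}$ keeps the pairing completely explicit.
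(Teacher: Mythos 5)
Your proof is correct and is essentially the paper's own argument: both rest on the fact that reversing and $w_\circ$-conjugating $\cw{c}$ yields $\cw{c^{-1}}$ up to commutations (which the paper cites from the literature rather than reproves), then combine Lemma~\ref{lem:commute} with Corollary~\ref{coro:conjugateReverse} to pair brick vectors, and finally evaluate at the positive greedy facet via Equality~(\ref{eq:greedy}) to get $\translation + \translation[][c^{-1}] = \sum_{\beta \in \Phi^+} \beta$ and hence pointwise cancellation of the paired, translated vertices. Your identification $\mirror{\positiveFacet_{\sq{c}}} = \negativeFacet_{\sq{c}^{-1}}$ is asserted at the same level of detail as in the paper, which likewise simply observes that the isomorphism sends the positive greedy facet to the negative greedy facet.
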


\begin{proof}
Abusing notation, we write here $\sq{c}^{-1}$ for the reduced expression $\reverse{\sq{c}}$ of~$c^{-1}$.
As observed in~\cite[Remark~6.6]{CeballosLabbeStump}, the $(\sq{c}^{-1})$-sorting word of~$w_\circ$ is, up to transpositions of consecutive commuting letters, obtained by reversing and conjugating the $c$-sorting word of~$w_\circ$.
Denote by $\permutation{}$ the resulting isomorphism between $\clusterComplex$ and $\clusterComplex[c^{-1}]$.
From Lemma~\ref{lem:commute} and Corollary~\ref{coro:conjugateReverse}, we obtain that
$$\brickVector[][\sq{c}][I] + \brickVector[][\sq{c}^{-1}][\permutation{I}] = \sum_{\beta \in \Phi^+} \beta,$$
for any facet~$I$ of~$\clusterComplex$.
Observe now that $\permutation{}$ sends the positive greedy facet~$\positiveFacet_{\sq{c}}$ to the negative greedy facet~$\negativeFacet_{\sq{c}^{-1}}$.
Using Equality~(\ref{eq:greedy}), the previous equality for~$\positiveFacet_{\sq{c}}$ thus yields
$$\translation + \translation[][c^{-1}] = \sum_{\beta \in \Phi^+} \beta.$$
Thus, $(\brickVector[][\sq{c}][I] - \translation) + (\brickVector[][\sq{c}^{-1}][\permutation{I}] - \translation[][c^{-1}]) = 0$.
This translates on the barycenters~to
$$\barycenter(c) + \barycenter(c^{-1}) = 0,$$
which concludes the proof.
\end{proof}

\begin{proof}[Proof of Theorem~\ref{thm:revisedmain}]
For any Coxeter element~$c$ of~$W$, we obtain from Propositions~\ref{prop:allEquals} and~\ref{prop:superposition} that ${2 \barycenter(c) = \barycenter(c) + \barycenter(c^{-1}) = 0}$.
\end{proof}


\subsection{Barycenter of fairly balanced associahedra}
\label{subsec:fairlyBalanced}

By slight modifications of the arguments in the previous two sections, we are now ready to prove Theorem~\ref{thm:main} in full generality.

\begin{proof}[Proof of Theorem~\ref{thm:main}]
Let $\basepoint$ be a basepoint within the fundamental chamber $\fundamentalChamber$ for which $w_\circ(\basepoint) = -\basepoint$, \ie $\basepoint_s = \basepoint_{\conjugation(s)}$ for all~$s \in S$. By a careful analysis of the proofs of the four Lemmas~\ref{lem:commute}--\ref{lem:reverse} we can see that they all remain valid in slightly modified forms.
Lemma~\ref{lem:commute} stays valid as it is.
In Lemma~\ref{lem:rotate}, Equality~\eqref{eq:firstfairlybalanced} in its proof implies that the summand $-2\omega_{q_1}$ must be replaced by $-2\basepoint_{q_1}\omega_{q_1}$.
Lemma~\ref{lem:conjugate} stays valid, though we use in Equality~\eqref{eq:secondfairlybalanced} that $\basepoint_{q_k} = \basepoint_{\conjugation(q_k)}$.
Lemma~\ref{lem:reverse} stays as well valid, where $\basepoint_{q_k} = \basepoint_{\conjugation(q_k)}$ is used in Equalities~\eqref{eq:thirdfairlybalanced} and~\eqref{eq:fourthfairlybalanced}.
Using this, we finally obtain that Propositions~\ref{prop:allEquals} and~\ref{prop:superposition} hold as well in the case of fairly balanced associahedra. This completes the proof Theorem~\ref{thm:main} in full generality.
\end{proof}


\subsection{Orbit barycenter}
\label{subsec:orbits}

In Problem~3.4 in~\cite{Hohlweg} and the preceding discussion, C.~Hohlweg remarks that the vertex barycenter construction in~\cite{HohlwegLortieRaymond} depends on the fact that the rotation and reflection symmetries of the underlying convex polygon do not change the barycenter of the associahedron in type $A$.
As we have seen above, Proposition~\ref{prop:allEquals} and Proposition~\ref{prop:superposition} play analogous roles in the case of general finite Coxeter systems.
Denote by $\shift{c}$ the bijection of the facets of~$\clusterComplex$ induced by the rotation of all letters in $\sq{c}$.
In~\cite[Theorem~8.10]{CeballosLabbeStump} it is shown that in type $A_{n}$ with $\sq{c} = \tau_1 \cdots \tau_n$ being the long cycle, the operation $\shift{c}$ corresponds to the cyclic rotation of the underlying $(n+3)$-gon.
Denote moreover by $\permutation{}_1$ an isomorphism between $\clusterComplex$ and $\clusterComplex[c^{-1}]$ obtained by rotation as in the proof of Proposition~\ref{prop:allEquals}, and by $\permutation{}_2$ an isomorphism between $\clusterComplex$ and $\clusterComplex[c^{-1}]$ obtained by reversing and $w_\circ$-conjugation as in the proof of Proposition~\ref{prop:superposition}.
Observe that again in type $A_{n}$ with $\sq{c} = \tau_1 \cdots \tau_n$, the operation $\permutation{}_2^{-1} \circ \permutation{}_1$ corresponds to a reflection of the $(n+3)$-gon.
For all finite Coxeter systems, we obtain the following theorem.
\begin{theorem}
\label{thm:orbittheorem}
  Let $\orbit$ be the $\shift{c}$-orbit of a facet $I$ of $\clusterComplex$, and let $\orbit'$ be the $\shift{c}$-orbit of the facet $\big(\permutation{}_2^{-1} \circ \permutation{}_1\big)(I)$ of $\clusterComplex$.
  Then
  $$\sum \big( \brickVector[][\sq{c}][I] - \translation \big) = 0,$$
  where the sum ranges over the orbit $\orbit$ if the two orbits $\orbit$ and $\orbit'$ are equal, or over the disjoint union $\orbit \sqcup \orbit'$ if they are different.
\end{theorem}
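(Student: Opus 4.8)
The goal is to show that the brick vectors sum to zero over a single $\shift{c}$-orbit $\orbit$ when $\orbit = \orbit'$, and over $\orbit \sqcup \orbit'$ otherwise. My strategy is to reduce this orbit statement to the two global statements already proven (Propositions~\ref{prop:allEquals} and~\ref{prop:superposition}) by tracking the \emph{same} isomorphisms orbit-by-orbit rather than facet-by-facet. The key point is that $\shift{c}$ implements the rotation of \emph{all} letters of $\sq{c}$, which is the composition of the single-letter rotations used in Proposition~\ref{prop:allEquals}, and it sends $\clusterComplex$ back to itself (since rotating all $n$ initial letters returns to $\sq{c}$ up to commutations). Thus I first want to understand how the translated brick vector $\brickVector[][\sq{c}][I] - \translation$ transforms under $\shift{c}$.

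**Step 1: the effect of $\shift{c}$ on translated brick vectors.**
First I would show that applying $\shift{c}$ to a facet adds a \emph{fixed} vector, independent of the facet, to its translated brick vector. Concretely, composing the single-letter rotation analysis of Proposition~\ref{prop:allEquals} across all $n$ letters of $\sq{c}$, Lemmas~\ref{lem:commute} and~\ref{lem:rotate} give
$$\big(\brickVector[][\sq{c}][\shift{c}(I)] - \translation\big) - \big(\brickVector[][\sq{c}][I] - \translation\big) = \xi$$
for some vector $\xi \in V$ that does not depend on $I$ (the $\R\alpha$-corrections from each rotation accumulate into a single facet-independent vector, exactly as the individual rotations did in Proposition~\ref{prop:allEquals}). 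Since iterating $\shift{c}$ a full orbit length $|\orbit|$ returns to $I$, summing this relation around the cycle forces $|\orbit|\,\xi = 0$, hence $\xi = 0$. This is the crucial observation: \textbf{$\shift{c}$ preserves each translated brick vector}. Consequently, the translated brick vectors are constant on each $\shift{c}$-orbit, so $\sum_{J \in \orbit}(\brickVector[][\sq{c}][J] - \translation) = |\orbit|\,(\brickVector[][\sq{c}][I] - \translation)$, and the orbit-barycenter claim becomes a statement about the single vector $\brickVector[][\sq{c}][I] - \translation$ together with its image under $\permutation{}_2^{-1}\circ\permutation{}_1$.

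**Step 2: pairing the two orbits via the reverse-conjugate symmetry.**
Next I would invoke Proposition~\ref{prop:superposition} in its facet-level form. Corollary~\ref{coro:conjugateReverse} (through $\permutation{}_2$) gives $(\brickVector[][\sq{c}][I] - \translation) + (\brickVector[][\sq{c}][\permutation{}_2(I)] - \translation[][c^{-1}]) = 0$ in $\clusterComplex[c^{-1}]$, while $\permutation{}_1$ relates $\clusterComplex$ and $\clusterComplex[c^{-1}]$ preserving translated brick vectors (by Step~1 applied to the rotation isomorphism, these are barycenter-preserving). Composing, the facet $J \eqdef (\permutation{}_2^{-1}\circ\permutation{}_1)(I)$ of $\clusterComplex$ satisfies
$$\big(\brickVector[][\sq{c}][I] - \translation\big) + \big(\brickVector[][\sq{c}][J] - \translation\big) = 0.$$
Since $J \in \orbit'$ and translated brick vectors are orbit-constant by Step~1, this pairs the (single) value on $\orbit$ with the (single) value on $\orbit'$ so that they are negatives of each other. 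If $\orbit \neq \orbit'$, summing over $\orbit \sqcup \orbit'$ with $|\orbit| = |\orbit'|$ (the orbits have equal size, as $\shift{c}$ conjugates compatibly through $\permutation{}_2^{-1}\circ\permutation{}_1$) gives zero. If $\orbit = \orbit'$, then $I$ and $J$ lie in the same orbit, hence have equal translated brick vectors, and the pairing relation forces that common value to be zero, so the sum over $\orbit$ alone vanishes.

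**The main obstacle.**
The delicate point is \textbf{Step~1's vanishing of $\xi$ and the commutativity of $\shift{c}$ with the symmetry $\permutation{}_2^{-1}\circ\permutation{}_1$}. Proving $\xi = 0$ requires that the accumulated $\R\Delta$-corrections from the $n$ successive single-letter rotations genuinely cancel, which I would establish either by the $|\orbit|\,\xi = 0$ torsion argument above or by directly summing the explicit weight-function corrections in Equation~\eqref{eq:firstfairlybalanced} around a full cycle. Equally, verifying that $\permutation{}_2^{-1}\circ\permutation{}_1$ intertwines the $\shift{c}$-action on $\clusterComplex$ with itself (so that it carries orbits to orbits of the same size, and so that $\orbit' = (\permutation{}_2^{-1}\circ\permutation{}_1)(\orbit)$ is well-defined as an orbit) needs the compatibility of rotation with reversal-conjugation from~\cite{CeballosLabbeStump}; this is the structural heart of the argument, and once it is in place the barycenter identity follows formally from Propositions~\ref{prop:allEquals} and~\ref{prop:superposition} restricted to the orbit.
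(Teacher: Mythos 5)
Your overall skeleton---use the intertwining of $\shift{c}$ with $\permutation{}_1$ and $\permutation{}_2$ to reduce to Propositions~\ref{prop:allEquals} and~\ref{prop:superposition}---is exactly the paper's strategy, but both facet-level invariance claims on which you build it are false, and the argument collapses there. In Step~1 you assert that $\brickVector[][\sq{c}][\shift{c}(I)] - \brickVector[][\sq{c}][I]$ equals a vector $\xi$ independent of the facet $I$. Lemma~\ref{lem:rotate} gives no such thing: a single rotation changes the brick vector by an element of $-2\omega_{q_1} + \R\alpha_{q_1}$, where the coefficient along $\alpha_{q_1}$ \emph{depends on the facet} (in the proof, the new weight is $\Weight$ or $q_1(\Weight)$ according to whether $1 \in I$, and $q_1(\Weight) - \Weight$ is a facet-dependent multiple of $\alpha_{q_1}$). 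Composing the $n$ single-letter rotations therefore yields a facet-dependent correction lying in $\R\alpha_{c_1} + \dots + \R\alpha_{c_n} = V$, so there is no fixed $\xi$ for your torsion argument $|\orbit|\,\xi = 0$ to act on. Indeed the conclusion of Step~1 cannot be repaired: if translated brick vectors were constant on $\shift{c}$-orbits, distinct facets in one orbit would give the same vertex of $\Asso$, contradicting the fact that $\clusterPolytope$ realizes $\clusterComplex$ (distinct facets give distinct vertices); in the paper's hexagon example, orbit A consists of six pairwise distinct vertices. Step~2 inherits the flaw, since it uses the false claim that $\permutation{}_1$ preserves translated brick vectors: the resulting facet-wise pairing $\big(\brickVector[][\sq{c}][I] - \translation\big) + \big(\brickVector[][\sq{c}][J] - \translation\big) = 0$ for $J = \big(\permutation{}_2^{-1} \circ \permutation{}_1\big)(I)$ would make the vertex set of $\Asso$ centrally symmetric, which already fails in type $A_3$: the three-dimensional associahedron has $9$ facets, an odd number, so it cannot be centrally symmetric.

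What is actually true, and what the paper's proof uses, is invariance of orbit \emph{sums}, never of individual vertices. The relations $\shift{c^{-1}} \circ \permutation{}_1 = \permutation{}_1 \circ \shift{c}$ and $\shift{c^{-1}} \circ \permutation{}_2 = \permutation{}_2 \circ \shift{c}$ guarantee that $\permutation{}_1$ and $\permutation{}_2$ carry $\shift{c}$-orbits to $\shift{c^{-1}}$-orbits and that $\orbit' = \big(\permutation{}_2^{-1} \circ \permutation{}_1\big)(\orbit)$, hence $\permutation{}_2(\orbit') = \permutation{}_1(\orbit)$. One then reruns the two proofs with sums restricted to an orbit. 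In the proof of Proposition~\ref{prop:allEquals}, each single rotation moves the orbit sum by an element of $\R\alpha_{c_i}$ (a sum of facet-dependent multiples of $\alpha_{c_i}$ is still a multiple of $\alpha_{c_i}$); after all $n$ rotations the index set returns to itself because $\shift{c}(\orbit) = \orbit$, so the telescoping identity together with the linear independence of $\Delta$ kills every correction. This is precisely where the orbit structure is indispensable: the cyclic argument needs a $\shift{c}$-invariant index set and cannot be run facet by facet. It gives
$$\sum_{J \in \orbit} \big( \brickVector[][\sq{c}][J] - \translation \big) = \sum_{K \in \permutation{}_1(\orbit)} \big( \brickVector[][\sq{c}^{-1}][K] - \translation[][c^{-1}] \big).$$
On the other side, Corollary~\ref{coro:conjugateReverse} \emph{is} facet-wise, so summing it over $\orbit'$ gives
$$\sum_{J \in \orbit'} \big( \brickVector[][\sq{c}][J] - \translation \big) + \sum_{K \in \permutation{}_2(\orbit')} \big( \brickVector[][\sq{c}^{-1}][K] - \translation[][c^{-1}] \big) = 0.$$
Since $\permutation{}_2(\orbit') = \permutation{}_1(\orbit)$, adding the two displays yields that the sum over $\orbit$ plus the sum over $\orbit'$ vanishes, and both cases of the theorem follow (when $\orbit = \orbit'$, twice the orbit sum is zero). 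So keep your intertwining observation, discard the pointwise invariance, and phrase every step as a statement about orbit sums.
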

\begin{proof}
  Observe that, as in type $A_{n-1}$, we have that $\shift{c^{-1}} \circ \permutation{}_1 = \permutation{}_1 \circ \shift{c}$, and as well $\shift{c^{-1}} \circ \permutation{}_2 = \permutation{}_2 \circ \shift{c}$. The statement then follows directly from Propositions~\ref{prop:allEquals} and~\ref{prop:superposition}.
\end{proof}

\begin{example}
  We consider again the subword complex $\clusterComplex[\sq{c}] = \subwordComplex[\sq{rstrstrsr}]$ as in Example~\ref{exm:clusterComplex}. We have seen in Example~\ref{exm:lodaycontinued} that the map sending the $i$\ordinal{} letter of~$\cw{c}$ to the $i$\ordinal{} internal diagonal of the $(n+3)$-gon in lexicographic order induces an isomorphism between the simplicial complex of crossing free sets of internal diagonals of the $(n+3)$-gon and the type $A_n$ subword complex. The $14$ triangulations of the $6$-gon are shown in Figure~\ref{fig:orbits}, and the facets of $\clusterComplex$ corresponding to the first triangulations in each of the rotation orbits A--D are given~by
  \begin{align}
    \{1,2,3\}, \quad \{6,7,8\}, \quad \{5,6,7\}, \quad \{1,3,7\}. \label{eq:orbitexamples}
  \end{align}
  Next, observe that $\shift{c}$ is the bijection
  $\left(\begin{smallmatrix}
    1 & 2 & 3 & 4 & 5 & 6 & 7 & 8 & 9 \\
    6 & 8 & 9 & 1 & 2 & 3 & 4 & 5 & 7
  \end{smallmatrix}\right)$
  on the position of letters in $\Q_3$.
  It thus sends those $4$ facets in~\eqref{eq:orbitexamples} to the facets
  $$\{6,8,9\}, \quad \{3,4,5\}, \quad \{2,3,4\}, \quad \{4,6,9\},$$
  which exactly correspond to the second triangulation in each orbit.
  Finally, observe that $\permutation{}_2^{-1} \circ \permutation{}_1$ is the bijection 
  $\left(\begin{smallmatrix}
    1 & 2 & 3 & 4 & 5 & 6 & 7 & 8 & 9 \\
    4 & 2 & 9 & 1 & 8 & 7 & 6 & 5 & 3
  \end{smallmatrix}\right)$
  and sends the $4$ facets in~\eqref{eq:orbitexamples} to the facets
  $$\{2,4,9\}, \quad \{5,6,7\}, \quad \{6,7,8\}, \quad \{4,6,9\}.$$
  This gives that orbit A is mapped onto itself, orbits B and C are interchanged, and orbit D is again mapped onto itself, as desired.
  Therefore, the barycenter of the vertices in orbit A is the origin, as is the barycenter of the vertices in orbit D. Moreover, the sum of the barycenter of vertices in orbit B and orbit C is as well the origin.
\begin{figure}
	\centerline{\includegraphics[width=\textwidth]{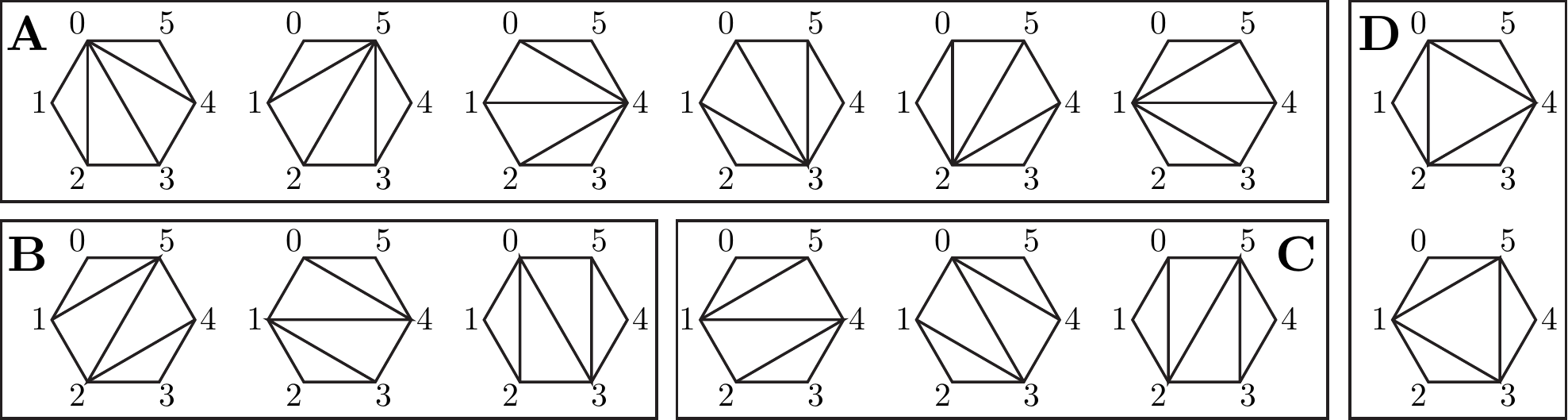}}
	\caption{The four orbits under rotation of the $14$~triangulations of the hexagon.}
	\label{fig:orbits}
\end{figure}

\end{example}


\section*{Acknowledgements}

  The authors would like to thank Christophe Hohlweg for bringing this open problem to their attention and for various discussions.

  We had the idea for the solution at the \emph{Formal Power Series and Algebraic Combinatorics} conference in Nagoya, Japan in August 2012. We are grateful to the organizers for this wonderful meeting and for the financial support.

\bibliographystyle{alpha}
\bibliography{PilaudStumpBarycenter}
\label{sec:biblio}

\end{document}